\def\titlerunning#1{\gdef\titrun{#1}}
\def\author#1{\gdef\autrun{\def\and{\unskip, }#1}\gdef\@author{#1}}
\def\address#1{{\def\and{\\\hspace*{18pt}}\renewcommand{\thefootnote}{}%
		\footnote {#1}}%
	\markboth{\autrun}{\titrun}}
\def\email#1{\hspace*{4pt}{\em e-mail}: #1}
\def\MSC#1{{\renewcommand{\thefootnote}{}%
		\footnote{\emph{Mathematics Subject Classification (2010):} #1}}}
\def\keywords#1{\par\medskip
	\noindent\textbf{Keywords:} #1}
\newcommand{\verz}[1]{\left\{ #1 \right\}}
\newcommand{\Fq}{{\mathbb{F}_q}}
\newcommand{\F}{{\mathbb{F}}}
\newcommand{\UP}{\mathrm{UP}(q^2)}
\newcommand{\DUP}{\mathrm{DUP}(q^2)}
\newcommand{\PG}{\mathrm{PG}(2,q^2)}
\newcommand{\PGL}{\mathrm{PGL}}
\newcommand{\PGU}{\mathrm{PGU}}
\newcommand{\cD}{\mathcal{D}}
\newcommand{\cU}{\mathcal{U}}
\newcommand{\cP}{\mathcal{P}}
\newcommand{\cF}{\mathcal{F}}
\newcommand{\cL}{\mathcal{L}}
\newcommand{\cO}{\mathcal{O}}
\newcommand{\cG}{\mathcal{G}}
\newcommand{\cX}{\mathcal{X}}
\newcommand{\cZ}{\mathcal{Z}}
\theoremstyle{plain}
\newtheorem{theorem}{Theorem}[section]
\newtheorem{prop}[theorem]{Proposition}
\newtheorem{cor}[theorem]{Corollary}
\newtheorem{lemma}[theorem]{Lemma}
\theoremstyle{definition}
\newtheorem{defin}[theorem]{Definition}
\newtheorem{remark}[theorem]{Remark}
\newtheorem{question}[theorem]{Question}
\newtheorem{problem}{Open problem}
\begin{document}		
	
	{\titlerunning{}
	
	\title{Triangle-free induced subgraphs of the unitary polarity graph}
	
	\author{Sam Mattheus
		\and
		Francesco Pavese}
	
	\date{}
	
	\maketitle
	
	\address{S. Mattheus: Department of Mathematics, Vrije Universiteit Brussel, Pleinlaan 2, 1050 Brussel, Belgium; \email{sam.mattheus@vub.ac.be} 
		\and 
		F. Pavese: Dipartimento di Meccanica, Matematica e Management, Politecnico di Bari, Via Orabona 4, 70125 Bari, Italy; \email{francesco.pavese@poliba.it}}
	
	\MSC{Primary 05C50; Secondary 05B25 05C69 05C35}}

	{\renewcommand{\thefootnote}{}%
		\footnote{\textregistered \, 2018. Licensed under the Creative Commons CC-BY-NC-ND 4.0 \\ http://creativecommons.org/licenses/by-nc-nd/4.0/ }}
	
	\begin{abstract} 
		Let $\perp$ be a unitary polarity of a finite projective plane $\pi$ of order $q^2$. The unitary polarity graph is the graph with vertex set the points of $\pi$ where two vertices $x$ and $y$ are adjacent if $x \in y^\perp$. We show that a triangle-free induced subgraph of the unitary polarity graph of an arbitrary projective plane has at most $(q^4+q)/2$ vertices. When $\pi$ is the Desarguesian projective plane $\PG$ and $q$ is even, we show that the upper bound is asymptotically sharp, by providing an example on $q^4/2$ vertices. Finally, the case when $\pi$ is the Figueroa plane is discussed.
		\keywords{polarity graph, triangle-free, interlacing, Figueroa plane}
	\end{abstract}	
	
	\section{Introduction}
	
	Let $\pi$ be a finite projective plane of order $q$. A polarity $\perp$ of $\pi$ is an involutory bijective map sending points to lines and lines to points which preserves incidence. A point $x$ of $\pi$ is said to be absolute if $x \in x^\perp$. The \textbf{polarity graph} $\cG(\pi,\perp)$ is the graph with vertex set the points of $\pi$ where two vertices $x$ and $y$ are adjacent if $x \in y^\perp$. Remark that we could have defined the graph $\cG(\pi,\perp)$ equivalently with the lines of $\pi$ as vertices. This graph is not simple: every absolute point gives rise to a loop. A classical theorem by Baer \cite{B} states that every polarity has at least $q+1$ absolute points, which implies that there a polarity graph has at least $q+1$ loops. With a slight abuse of notation we will identify the vertices of the polarity graph with the points (or lines) of the plane. We will say for example that a point $x$ is adjacent to another point $y$. For all definitions and notions regarding projective planes and polarities not mentioned in Section \ref{section1}, we refer the reader to \cite{BE, JH, HP}.
	
	Polarity graphs and their properties have been the subject of study over the last few years. Questions regarding their independence number \cite{GN, MPS, MW}, chromatic number \cite{XTT} and other properties have been posed and (partially) answered. The motivation behind this line of research lies first of all in the fact that these graphs possess a lot of structure and interesting features. More importantly, polarity graphs are related to some classes of problems in extremal graph theory, among which Ramsey problems and Tur\'an-type problems. For example in the latter, F\"uredi \cite{F1, F2} has shown that the unique graph with the most edges among all graphs on $q^2+q+1$ vertices not containing $C_4$ as a subgraph is the polarity graph, where $\perp$ is an orthogonal polarity, i.e., a polarity with $q+1$ absolute points (which is the least possible as we already mentioned).
	
	Recently, Loucks and Timmons \cite{LT} have drawn attention to the following problem.
	
	\begin{question}
		What is the largest set of non-absolute vertices in $\cG(\pi,\perp)$ inducing a triangle-free subgraph?
	\end{question}
	
	Note that only non-absolute vertices are considered, since triangles in a polarity graph cannot contain absolute vertices, see also Section \ref{section1}.
	
	This problem first appeared in \cite{MW} in the context of extremal graph theory, where the authors considered the case when $\pi = \mathrm{PG}(2,q)$ and $\perp$ an orthogonal polarity. They used a construction due to Parsons \cite{P} to obtain an upper bound on the independence number of a 3-uniform hypergraph which first appeared in \cite{LV}. Parsons' construction on which they relied is exactly a triangle-free induced subgraph of $\cG(\pi,\perp)$. 
	
	Loucks and Timmons also mention that one of the motivations behind this question is from Tur\'an-type problems. In particular, we are interested in the maximum number of edges in an $n$-vertex graph without $C_3$ or $C_4$ as a subgraph. Indeed, it is natural to approach this problem by considering $C_4$-free graphs with many edges, and finding a $C_3$-free subgraph thereof.
	
	In this article, we investigate the case when $\perp$ is a unitary polarity. Then the order of the projective plane is necessarily a square, say $q^2$, there are $q^3+1$ absolute points and the set of absolute points forms a unital $\cU$. Note that there are unitals which do not arise from a unitary polarity, see \cite{BE} for further results on this topic. We denote by $\UP$ a unitary polarity graph for an arbitrary projective plane of order $q^2$. In the first part of the paper, by refining the techniques used in \cite{LT}, we obtain the following upper bound for a triangle-free induced subgraph of $\UP$. 
	
	\begin{theorem}\label{Th1}
	Let $S$ be a subset of non-absolute vertices of $\UP$ inducing a triangle-free subgraph, then 
	\[|S| \leq \frac{q^4+q}{2}.\]
	Moreover, if equality holds and $\ell$ is a line of $\pi$, then $|\ell \cap S| \in \{\frac{q^2-q}{2}, \frac{q^2+q}{2}\}$. 
	\end{theorem}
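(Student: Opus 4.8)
The plan is to use an interlacing-of-eigenvalues argument on the adjacency matrix $A$ of $\UP$, restricted to the induced subgraph on the set $S$. First I would recall that $\UP$ is a strongly-regular-like graph: each non-absolute vertex $x$ has $x^\perp$ meeting the unital $\cU$ in $q+1$ absolute points, so $x$ is adjacent to $q^2+1 - (q+1) = q^2 - q$ non-absolute vertices (and $x \notin x^\perp$ since $x$ is non-absolute), while absolute vertices have degree $q^2$ in the loopless sense but carry a loop. The key spectral fact I would invoke is that the points-versus-lines incidence structure of a projective plane of order $q^2$ gives $A$ eigenvalues essentially $q^2+1$ (once) and $\pm q$; more precisely $A$ satisfies a quadratic-type relation coming from $A^2 = (q^2+1)I + (q^2 - \text{something})\cdot(\text{stuff})$ — I would need to be careful here and instead work with the matrix $B = A$ viewed via $AA^T$ from the incidence matrix, which has eigenvalues $q^2+1$ and $q$ (the latter with multiplicity $q^4+q^2$). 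For a polarity graph the relevant symmetric matrix is $A$ itself and its eigenvalues are $q^2+1$ and $\pm q$.

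**The counting / interlacing step.**

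Let $S$ be the triangle-free set, $|S| = s$. Since the induced subgraph $A_S$ on $S$ is triangle-free, and each vertex in $S$ has degree $q^2-q$ in the full non-absolute graph, I would first bound the degrees inside $S$: a vertex $x \in S$ can be adjacent inside $S$ to at most one vertex from any line through a fixed neighbour — more carefully, triangle-freeness forces that for each $y \in S$ adjacent to $x$, the line $x^\perp$ meets $N_S(x) \cup \{x\}$ in a controlled way. The cleaner route is: since the subgraph on $S$ is triangle-free, by a suitable eigenvalue interlacing statement applied to the quotient matrix or to the principal submatrix $A_S$, the smallest eigenvalue of $A_S$ is at least $-q$, and the average degree $d$ of $A_S$ satisfies $d \le \lambda_{\min}\cdot(\text{ratio bound})$. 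Applying the ratio bound (Hoffman-type) for independent-ish sets — here adapted because $S$ is not independent but triangle-free, so one works with the fact that $A_S$ has no $K_3$, hence its largest eigenvalue is at most $\sqrt{s-1}$ or one uses that a triangle-free graph on $s$ vertices with minimum eigenvalue $\ge -q$ has at most ... — I would combine $\lambda_{\min}(A_S) \ge -q$ with a count of edges to get $s \le (q^4+q)/2$. The precise inequality I expect to fall out is: writing $e(S)$ for the number of edges, triangle-freeness plus an eigenvalue bound gives $2e(S) \le q \cdot s$, and a second count (summing $|x^\perp \cap S|$ over $x \in S$, using that each such line contains at most $\lceil |S|/\text{?}\rceil$ points of $S$ paired with the non-adjacency structure) forces $s(q^2-q) \le$ (something), yielding the bound after rearrangement.

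**The equality case.**

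For the "moreover" part, equality in the interlacing/ratio bound forces the partition $(S, V\setminus S)$ to be equitable with respect to $A$, i.e. every vertex outside $S$ has a constant number of neighbours in $S$ and the eigenvalue bound is tight. From the equitable-partition data I would read off that for a line $\ell = z^\perp$, the quantity $|\ell \cap S|$ takes only the values allowed by the quotient matrix eigenvalue equation. Concretely: if $z \in S$, counting edges from $z$ into $S$ together with tightness forces $|z^\perp \cap S|$ (the neighbours of $z$ in $S$, noting $z \notin z^\perp$) to equal $(q^2-q)/2$; combined with $z \in \ell \Leftrightarrow z^\perp \ni z^{\perp\perp}$ machinery one gets the two values $(q^2 \pm q)/2$ for an arbitrary line. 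Alternatively one argues directly: double-counting pairs $(x, y)$ with $x, y \in \ell \cap S$ and $x \sim y$ shows each line meets $S$ in $t$ points with $\binom{t}{2}$ or fewer edges, and tightness in the global edge count forces $t \in \{(q^2-q)/2, (q^2+q)/2\}$.

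**Main obstacle.**

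The hardest part will be pinning down the correct spectral statement to apply: $\UP$ is not strongly regular (absolute vertices have loops and the graph is irregular between absolute and non-absolute vertices), so I would likely restrict attention to the induced subgraph on the $q^4$ non-absolute vertices, verify it is regular of degree $q^2-q$, and compute its spectrum — I expect eigenvalues $q^2-q$, together with $\pm q$ and possibly one more exceptional value coming from how the unital sits inside. Getting this spectrum exactly right (and handling the one or two exceptional eigenvalues so that the ratio/interlacing bound yields precisely $(q^4+q)/2$ and not something weaker) is the technical crux. A secondary subtlety is that triangle-freeness of the induced graph on $S$ must be converted into a clean eigenvalue lower bound $\lambda_{\min}(A_S) \ge -q$: this should follow because any induced subgraph's smallest eigenvalue interlaces that of the non-absolute subgraph, whose smallest eigenvalue is $-q$, so in fact $\lambda_{\min}(A_S) \ge -q$ holds for any $S$, and triangle-freeness is used instead to bound $\lambda_{\max}(A_S)$ or the edge count from above — I would sort out which of these two roles each hypothesis plays early on, since the whole argument hinges on it.
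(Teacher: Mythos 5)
Your high-level plan (restrict to the non-absolute points, extract a spectral gap of $q$ by interlacing, and play an eigenvalue inequality against a triangle-freeness edge count) is indeed the paper's approach, but the two steps you leave unresolved are exactly where the proof lives, and the one concrete inequality you commit to is false. Triangle-freeness does \emph{not} give $2e(S)\le q|S|$: the paper's own extremal construction for $q$ even is a $q(q-1)/2$-regular triangle-free induced subgraph on $q^4/2$ vertices, so $2e(S)=\frac{q(q-1)}{2}|S|>q|S|$ already for $q\ge 4$. The correct combinatorial statement, which you gesture at and then abandon, is that for a non-absolute vertex $x$ its $q^2-q$ non-absolute neighbours split into $(q^2-q)/2$ adjacent pairs (for a non-absolute neighbour $y$, the point $z=x^\perp\cap y^\perp$ is again a neighbour of $x$ and of $y$), so triangle-freeness forces $d_S(x)\le (q^2-q)/2$ and hence the \emph{upper} bound $2e(S)\le\frac{q^2-q}{2}|S|$. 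The eigenvalue input then plays the opposite role from the one you assign it: the expander mixing lemma applied to the $(q^2-q)$-regular graph $\Gamma$ on the $q^4-q^3+q^2$ non-absolute points, with $\lambda=q$, gives a \emph{lower} bound $2e(S)\ge \frac{(q^2-q)|S|^2}{q^4-q^3+q^2}-q|S|\bigl(1-\frac{|S|}{q^4-q^3+q^2}\bigr)$ once $|S|\ge(q^4-q^3+q^2)/2$; comparing the two bounds and solving gives $|S|\le(q^4+q)/2$. You admit you have not decided which hypothesis yields which bound, and your worry about ``exceptional eigenvalues'' of $\Gamma$ is only settled by knowing the full spectrum of $\UP$, namely $q^2+1$, $q$, $-q$ with multiplicities $1$, $(q^4+2q^2-q)/2$, $(q^4+q)/2$; interlacing with these multiplicities pins the relevant eigenvalue of $\Gamma$ to exactly $q$, and without them your $\lambda$ is undetermined, so the bound $(q^4+q)/2$ does not fall out.

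For the equality case your equitable-partition idea points in the right direction but is not a proof. The paper partitions \emph{all} points of $\pi$ into $(\cU,S,R)$, computes the $3\times3$ quotient matrix of the looped adjacency matrix of $\UP$ explicitly (using that equality forces $d_S(x)=(q^2-q)/2$ for every $x\in S$), finds its eigenvalues $q^2+1,-q,-q$, concludes the interlacing is tight, and then Haemers' theorem forces constant row and column sums in every block, so every point not in $S$, absolute or not, has exactly $(q^2+q)/2$ neighbours in $S$; since every line of $\pi$ is $x^\perp$ for a unique point $x$, this is precisely the two-intersection conclusion. Your two-class partition $(S,V\setminus S)$ of the non-absolute graph cannot reach the lines $x^\perp$ with $x$ absolute (the tangent lines), so the inclusion of $\cU$ as a third class is essential, and the alternative double count you sketch is not carried out.
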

	
	In the second part of the paper we deal with the case when $\pi$ is the Desarguesian projective plane $\PG$. We will denote this graph by $\DUP$. When $q$ is even, we are able to show that the upper bound is asymptotically sharp.
	
	\begin{theorem}\label{Th2}
		For $q$ even, there exists a set of non-absolute vertices of $\DUP$ inducing a triangle-free subgraph of size $q^4/2$.
	\end{theorem}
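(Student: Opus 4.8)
The plan is to pass to an affine model and to exhibit an explicit $2$-colouring of the non-absolute points, one colour class of which is triangle-free and asymptotically half of all points.

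First I would fix a non-absolute point $P_0=\langle v_0\rangle$ of $\PG$; writing $\beta$ for the underlying Hermitian form, after rescaling $v_0$ we may take $\mu:=\beta(v_0,v_0)$ to be any prescribed element of $\Fqstar$. The polar line $\ell:=P_0^{\perp}$ is secant, and $W:=v_0^{\perp}$ carries a non-degenerate $2$-dimensional Hermitian form $\gamma:=\beta|_W$; every point of $\PG$ off $\ell$ has the form $Q_u:=\langle v_0+u\rangle$ for a unique $u\in W$, and since $q$ is even, $Q_u$ is non-absolute iff $\gamma(u,u)\neq\mu$ and $Q_u\sim Q_{u'}$ iff $\gamma(u,u')=\mu$. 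The crucial computation is this: if $\{Q_{u_1},Q_{u_2},Q_{u_3}\}$ is a triangle, then the $u_i$ are $\F_{q^2}$-linearly dependent (as $\dim_{\F_{q^2}}W=2$), and pairing the dependency with each $u_j$ against $\gamma$ and using $\gamma(u_i,u_j)=\mu$ for $i\neq j$ (and $\mu^q=\mu$) shows that it normalises to $\lambda_1u_1+\lambda_2u_2+\lambda_3u_3=0$ with $\lambda_i=\mu/(\gamma(u_i,u_i)+\mu)\in\Fqstar$ and $\lambda_1+\lambda_2+\lambda_3=1$. Equivalently, putting $\rho(u):=\gamma(u,u)\big/\bigl(\mu(\gamma(u,u)+\mu)\bigr)\in\Fq$ (defined since $\gamma(u,u)\neq\mu$), every triangle satisfies $\rho(u_1)+\rho(u_2)+\rho(u_3)=0$, and also $\lambda(u_1)u_1+\lambda(u_2)u_2+\lambda(u_3)u_3=0$ with $\lambda(u):=\mu/(\gamma(u,u)+\mu)$. (These identities are consistent with, and could be read off from, the structural fact that the subgraph of $\UP$ induced on the non-absolute points decomposes into triangles: every edge between non-absolute points lies in exactly one triangle, namely with the unique common neighbour of its endpoints; this is also what makes a colour class of size $\approx\frac12|V|$ — rather than the much smaller size of a coclique — plausible.)

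With this in hand, set $\chi(u):=\mathrm{Tr}_{\Fq/\F_2}(\rho(u))\in\F_2$ on the non-absolute points off $\ell$. Since $\rho$ sums to zero over every triangle, a triangle has $\chi(u_1)+\chi(u_2)+\chi(u_3)=\mathrm{Tr}_{\Fq/\F_2}(0)=0$, hence cannot be monochromatic in colour $1$; therefore $S:=\{Q_u : Q_u\ \text{non-absolute},\ \chi(u)=1\}\cup\{P_0\}$ induces a triangle-free subgraph (no triangle can meet $\ell$, and $P_0$ is isolated in the induced subgraph, all its neighbours lying on $\ell$). A short count, using that the fibre $\{u:\gamma(u,u)=c\}$ has $q^3-q$ vectors for $c\neq0$ and that $\rho$ restricts to a bijection $\Fq\setminus\{\mu\}\to\Fq\setminus\{1/\mu\}$ with $\rho(0)=0$, gives $|S|=\tfrac q2(q^3-q)+1=\tfrac{q^4-q^2}{2}+1$ once $\mu$ is chosen with $\mathrm{Tr}_{\Fq/\F_2}(1/\mu)=0$ (possible for $q\ge4$).

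The main obstacle is to push this up to \emph{exactly} $q^4/2$: the colouring $\chi$ forces all $\approx q^3$ non-absolute points $Q_u$ with $\gamma(u,u)=0$ (the isotropic vectors $u$) into colour $0$, which accounts for the entire deficit of about $q^2/2$. The remedy is to use a finer triangle-sum-zero invariant. Besides $\rho$, one has the $W$-valued triangle-sum-zero map $u\mapsto\lambda(u)u$; composing with any $\F_2$-linear $L\colon W\to\F_2$ yields triangle-sum-zero functions $\psi(u)=L(\lambda(u)u)$, and — decisively — on the isotropic locus $\lambda(u)=1$, so $\psi(u)=L(u)$ there. Replacing $\chi$ by $\mathrm{Tr}_{\Fq/\F_2}(c\,\rho(u))+L(\lambda(u)u)$ for a suitable $c\in\Fq$ and $L$ (and the matching choice of $P_0$, i.e.\ of $\mu$) keeps the colour-$1$ class triangle-free while redistributing the isotropic points; proving that the parameters can be tuned so that this class has cardinality exactly $q^4/2$ amounts to an exact evaluation of the character sums attached to the Hermitian quadric over $\Fq$ in characteristic $2$. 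Performing that evaluation — or, alternatively, enlarging $S$ by a precisely chosen triangle-safe set of non-absolute points of $\ell$ and cancelling the conflicts this creates — is where the real work lies.
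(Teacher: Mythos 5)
Your triangle invariant is sound and is in substance the same computation as the paper's: your fibration of the points off $\ell$ by $c=\gamma(u,u)$ is exactly a pencil of Hermitian curves through $\cU\cap\ell$, your identity $\rho(u_1)+\rho(u_2)+\rho(u_3)=0$ is the paper's triangle condition after a M\"obius change of parameter, and selecting a colour class by an $\mathbb{F}_2$-linear (trace/hyperplane) condition is the analogue of the paper's choice of a ``good set''. The genuine gap is the size: as you yourself compute, your class has $(q^4-q^2)/2+1$ vertices, not $q^4/2$, and the theorem asserts a set of size $q^4/2$. The deficit is structural, not an accident of the counting: because you anchor the construction at a \emph{non-absolute} point $P_0$ (so $\ell=P_0^\perp$ is a secant line), every fibre with $c\neq 0$ has only $q^3-q$ points off $\ell$, while the large isotropic fibre $c=0$ (roughly $q^3$ points) has $\rho\equiv 0$ and is forced into colour $0$; hence no choice of $\mu$ and of the trace functional can push this colouring past $(q^4-q^2)/2$ plus lower-order additions. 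Your proposed repair (adding $L(\lambda(u)u)$ to the colouring and tuning $c$, $L$, $\mu$ so that the class has cardinality exactly $q^4/2$) is explicitly deferred to ``an exact evaluation of character sums'': it is not carried out, and it is not evident that the parameters can be tuned to hit $q^4/2$ exactly. As it stands, the proposal proves a weaker bound than the statement.

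The paper avoids the problem by anchoring the pencil at an \emph{absolute} point: it takes the pencil $\cU_\lambda:\lambda X^{q+1}+X^qY+XY^q+Z^{q+1}=0$, $\lambda\in\Fq$, generated by $\cU$ and the doubled tangent line at $U_2$. Each $\cU_\lambda$ meets that tangent line only in $U_2$, so the sets $\cU_\lambda\setminus\{U_2\}$ partition the $q^4$ points off the tangent into $q$ fibres of size exactly $q^3$, all of whose points are non-absolute when $\lambda\neq 0$. The triangle condition becomes $\lambda_1\lambda_2+\lambda_2\lambda_3+\lambda_1\lambda_3\neq 0$, equivalently (after inverting the parameters) that no three chosen parameters sum to $0$, and taking the $q/2$ parameters corresponding to a coset $H+1$ of an additive hyperplane $H\not\ni 1$ yields exactly $(q/2)\cdot q^3=q^4/2$ vertices. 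Your own machinery would close the gap if you re-anchored it this way (base point on $\cU$, tangent polar line) instead of pursuing the character-sum correction.
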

	
	In the last part of the paper we consider the case when $\pi$ is the Figueroa plane $\cF$. The plane $\cF$ is obtained by the Desarguesian plane ${\rm PG}(2,q^3)$, by distorting certain lines. It is known that every polarity of $\cF$ induces a polarity of ${\rm PG}(2,q^3)$ \cite[Theorem 4.2]{Ham}. Vice versa, under certain assumptions, a polarity $\perp$ of the Desarguesian projective plane ${\rm PG}(2,q^3)$ gives rise to a polarity $\perp'$ of the Figueroa plane $\cF$. In this case, we show that a triangle-free induced subgraph of $\cG({\rm PG}(2,q^3), \perp)$ gives rise to a triangle-free induced subgraph of $\cG(\cF, \perp')$. This answers a question of Loucks and Timmons \cite[Question 1.4]{LT} in the case when $\pi$ is the Figueroa plane of order $q^6$.

	\section{Preliminaries about $\UP$}\label{section1}
	
	Before we can prove these results, we need some structural information about $\UP$, in particular about the neighbourhood structure, see \cite[Chapter XII]{HP}. If $x$ is a point of $\pi$, then $x^\perp$ denotes its {\em polar line}. Suppose first that $x$ is an absolute point, that is, its polar line contains $x$ itself. Therefore, $x$ is adjacent to $q^2$ vertices and has a loop. Let $y$ be a neighbour of $x$, then $x^\perp \cap y^\perp = \verz{x}$, which implies that $y$ has no neighbours in $N(x)$. This means that the subgraph induced by $x$ and its neighbours looks like a star. In particular, $x$ can never be contained in a triangle.
	
	On the other hand, if $x$ is a non-absolute point, then it is adjacent to $q^2+1$ other vertices. Among these there are $q+1$ absolute points, while the remaining $q^2-q$ are non-absolute points. Let $y$ be a non-absolute neighbour of $x$, then $x^\perp$ and $y^\perp$ intersect in a third point $z$. Hence, $x,y,z$ form a triangle in $\UP$. Moreover, $z$ is the unique common neighbour of $x$ and $y$. This implies that non-absolute neighbours of $x$ come in adjacent pairs, giving rise to $(q^2-q)/2$ triangles with common vertex $x$.
	
	A {\em self-polar triangle of $\pi$} (with respect to $\perp$) is a triangle each of whose vertices has the opposite side as polar line. From the discussion above it follows that triangles in $\UP$ are in one-to-one correspondence with self-polar triangles of $\pi$ and that there are exactly 
	$$	
	\frac{1}{3} (q^4-q^3+q^2) \frac{q^2-q}{2} = \frac{q^3(q^2-q+1)(q-1)}{6}
	$$
	of such triangles. Here and in the sequel we use the term {\em triangle} to refer to a triangle in $\UP$ or to a self-polar triangle of $\pi$.

	\section{The upper bound}
	
	In \cite{LT}, an upper bound for the number of vertices of $\UP$ inducing a triangle-free subgraph was proved. We refine their argument in order to obtain a better upper bound, see Theorem \ref{Th1}. To do so, we use techniques from spectral graph theory. 
	
	Given two subsets of vertices $S,T$ in a regular graph, let $e(S,T)$ denote the number of ordered pairs $(s,t)$, $s$ adjacent to $t$, where $s \in S, t \in T$. If $S = T$, then we simply write $e(S)$ instead of $e(S,T)$. Note that $e(S,T) = e(T,S)$. The following result, which first appeared in \cite[p17 Theorem 2.1.4]{Hthesis}, is known as the {\em expander mixing lemma} and it is a useful tool to estimate $e(S)$. Furthermore, it has found several applications in finite geometry over the last years \cite{BMS,LS,MW,WSV}.
	
	\begin{lemma}
		Let $G = (V,E)$ be a $d$-regular simple graph on $n$ vertices with eigenvalues $d = \lambda_1 \geq \lambda_2 \geq \dots \geq \lambda_n$. Let $\lambda = \max(|\lambda_2|,|\lambda_n|)$ be the second largest eigenvalue (in absolute value) and $S \subseteq V$, then the following inequality holds:
		\[\left\vert 2e(S)-\frac{d|S|^2}{n}\right\vert \leq \lambda|S|\left(1-\frac{|S|}{n}\right).\]
	\end{lemma}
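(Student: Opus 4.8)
The plan is to run the standard spectral argument: expand the characteristic vector of $S$ in an orthonormal eigenbasis of the adjacency matrix and isolate the contribution of the top eigenvector. Let $A$ be the adjacency matrix of $G$. Since $G$ is simple and $d$-regular, $A$ is real symmetric, so the spectral theorem provides an orthonormal basis $v_1, \dots, v_n$ of $\mathbb{R}^n$ with $A v_i = \lambda_i v_i$. Regularity forces the all-ones vector $\mathbf{1}$ to be an eigenvector with eigenvalue $\lambda_1 = d$, and after normalising we take $v_1 = \mathbf{1}/\sqrt{n}$. The first step is to translate the combinatorial quantity into linear algebra: writing $\chi$ for the characteristic vector of $S$, one checks directly that $2e(S) = \chi^\top A \chi$, the quadratic form of $A$ evaluated at $\chi$.

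Next I would expand $\chi = \sum_{i=1}^n c_i v_i$ with coefficients $c_i = \langle \chi, v_i\rangle$. Two elementary observations carry the argument. First, projecting onto the top eigenvector gives $c_1 = \langle \chi, \mathbf{1}/\sqrt{n}\rangle = |S|/\sqrt{n}$, a coefficient depending only on the size of $S$. Second, Parseval's identity yields $\sum_{i=1}^n c_i^2 = \|\chi\|^2 = |S|$. Substituting the expansion into the quadratic form and using orthonormality of the $v_i$ gives
\[
2e(S) \;=\; \sum_{i=1}^n \lambda_i c_i^2 \;=\; d\,\frac{|S|^2}{n} + \sum_{i=2}^n \lambda_i c_i^2 ,
\]
so that the deviation to be estimated is precisely the tail sum $2e(S) - \frac{d|S|^2}{n} = \sum_{i=2}^n \lambda_i c_i^2$.

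It then remains to bound this tail, and here the definition of $\lambda$ does all the work: since $|\lambda_i| \le \lambda$ for every $i \ge 2$, the triangle inequality gives
\[
\left| \sum_{i=2}^n \lambda_i c_i^2 \right| \le \lambda \sum_{i=2}^n c_i^2 = \lambda\Big(|S| - \tfrac{|S|^2}{n}\Big) = \lambda |S|\Big(1 - \tfrac{|S|}{n}\Big),
\]
where the middle equality combines the two observations above via $\sum_{i\ge 2} c_i^2 = |S| - c_1^2$. This is exactly the desired inequality. There is no serious obstacle in this proof; the only point deserving care is the bookkeeping around the trivial eigenvector, namely that peeling off the $i=1$ term produces exactly the ``expected'' density term $d|S|^2/n$ and simultaneously reduces the residual mass from $|S|$ to $|S| - |S|^2/n$, which is what makes the estimate tight.
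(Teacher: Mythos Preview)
Your argument is correct and is precisely the standard proof of the expander mixing lemma. Note, however, that the paper does not supply its own proof of this statement: it is quoted as a known result with a reference to Haemers' thesis, so there is no in-paper proof to compare against. Your write-up would serve perfectly well as the omitted proof.
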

	
	We will apply the expander mixing lemma to the graph $\Gamma$ obtained from $\UP$ by deleting its $q^3+1$ absolute points. Hence, $\Gamma$ is a simple $(q^2-q)$-regular graph on $q^4-q^3+q^2$ vertices. To apply the lemma, the second largest eigenvalue of $\Gamma$ is needed. This can be derived by using a technique due to Haemers, called {\em eigenvalue interlacing}. In what follows, we recall some definitions and results from \cite{H}.
	
	\begin{defin}
	Two sequences of real numbers $\lambda_1 \geq \dots \geq \lambda_n$ and $\mu_1 \geq \dots \geq \mu_m$ with $m < n$ \textbf{interlace} if
	\[\lambda_i \geq \mu_i \geq \lambda_{n-m+i} \text{ for } 1 \leq i \leq m.\]
	\end{defin}
		
	\begin{theorem} \label{interlacing1}
		Let $H$ be an induced subgraph of a graph $G$, then the eigenvalues of $H$ interlace those of $G$.
	\end{theorem}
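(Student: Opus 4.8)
The plan is to realise this as the special case of Cauchy's interlacing theorem for a principal submatrix of a real symmetric matrix, proved via the Courant--Fischer variational characterisation of eigenvalues. Let $A$ be the adjacency matrix of $G$ (on vertex set of size $n$) with eigenvalues $\lambda_1 \geq \dots \geq \lambda_n$ and an orthonormal eigenbasis $v_1, \dots, v_n$, and let $B$ be the principal submatrix of $A$ indexed by the $m$ vertices of $H$; thus $B$ is the adjacency matrix of $H$, with eigenvalues $\mu_1 \geq \dots \geq \mu_m$ and an orthonormal eigenbasis $u_1, \dots, u_m \in \mathbb{R}^m$. For each $j$ extend $u_j$ to $\tilde u_j \in \mathbb{R}^n$ by inserting zeros in the coordinates outside $V(H)$. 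The key observation is that, because all entries of $A$ lying outside the principal block are multiplied by a zero coordinate, one has $\tilde u_j^{T} A\, \tilde u_k = u_j^{T} B\, u_k = \mu_j \delta_{jk}$, and the $\tilde u_j$ remain orthonormal in $\mathbb{R}^n$.

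To prove $\mu_i \leq \lambda_i$ for $1 \leq i \leq m$, I would take the subspace $U = \langle \tilde u_1, \dots, \tilde u_i \rangle$ of dimension $i$ together with $W = \langle v_i, \dots, v_n \rangle$ of dimension $n-i+1$. Since $\dim U + \dim W = n+1 > n$, there is a unit vector $x \in U \cap W$. Writing $x = \sum_{j=1}^i c_j \tilde u_j$ and using the observation above gives $x^{T} A\, x = \sum_{j=1}^i c_j^2 \mu_j \geq \mu_i$, whereas expanding $x$ in the basis $v_i, \dots, v_n$ gives $x^{T} A\, x \leq \lambda_i$. Hence $\mu_i \leq \lambda_i$.

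For the complementary inequality $\mu_i \geq \lambda_{n-m+i}$ I would argue symmetrically (equivalently, apply the previous paragraph to $-A$ and $-B$): take $U' = \langle \tilde u_i, \dots, \tilde u_m \rangle$ of dimension $m-i+1$ and $W' = \langle v_1, \dots, v_{n-m+i} \rangle$ of dimension $n-m+i$, whose dimensions again sum to $n+1$, choose a unit vector $y \in U' \cap W'$, and note that expanding $y$ in the two bases yields $\mu_i \geq y^{T} A\, y \geq \lambda_{n-m+i}$. Combining the two estimates gives exactly $\lambda_i \geq \mu_i \geq \lambda_{n-m+i}$ for $1 \leq i \leq m$, which is the asserted interlacing. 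There is no real obstacle here beyond keeping the bookkeeping of the subspace dimensions and the sign flip straight; alternatively one may simply invoke the more general statement of Haemers \cite{H}, in which $B = C^{T} A C$ for any $n \times m$ matrix $C$ with $C^{T} C = I_m$, the induced-subgraph case being that where the columns of $C$ are the standard basis vectors indexed by $V(H)$.
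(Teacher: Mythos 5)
Your proof is correct: it is the standard Cauchy interlacing argument (extend eigenvectors of the principal submatrix by zeros, then use a dimension count on the two eigenspace spans via Courant--Fischer), and this is precisely the special case of Haemers' result with $C$ the coordinate-selection matrix, as you note. The paper itself gives no proof of this theorem -- it simply quotes it from the cited reference \cite{H} -- so your argument fills in exactly the classical proof underlying that citation, with no gaps.
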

	
	From \cite{GN}, the eigenvalues of $\UP$ are $q^2+1, q, -q$ with multiplicities $1, (q^4+2q^2-q)/2, (q^4+q)/2$,  respectively. Therefore, by Theorem \ref{interlacing1}, the eigenvalues of $\Gamma$, which are $q^2-q = \mu_1 \geq \dots \geq \mu_m$, with $m = q^4-q^3+q^2$, have to satisfy
	\[q = \lambda_2 \geq \mu_2 \geq \lambda_{q^3+3} = q \hspace{.2cm}\text{ and }\hspace{.2cm} -q =  \lambda_m \geq \mu_m \geq \lambda_n = -q.\]
	Either way, the second largest eigenvalue of $\Gamma$ (in absolute value) equals $q$.
	
	\begin{prop}\label{upper bound}
		Let $S$ be a subset of non-absolute vertices of $\UP$ inducing a triangle-free subgraph, then 
		\[|S| \leq \frac{q^4+q}{2}.\]
	\end{prop}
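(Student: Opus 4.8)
The plan is to apply the expander mixing lemma to $\Gamma$ and play it off against an upper bound on the number of edges inside $S$ coming from the triangle-free hypothesis together with the local structure of $\UP$ recalled in Section~\ref{section1}. All the spectral input is already in place: $\Gamma$ is $(q^2-q)$-regular on $n=q^4-q^3+q^2$ vertices, and its second largest eigenvalue in absolute value equals $q$. So what is left is combinatorial, plus one short computation.

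First I would record the local constraint that the triangle-free hypothesis imposes. Fix $v\in S$. In $\Gamma$ the vertex $v$ has exactly $q^2-q$ neighbours: its $q^2+1$ neighbours in $\UP$ minus the $q+1$ absolute ones. By the discussion in Section~\ref{section1} these $q^2-q$ non-absolute neighbours come in $(q^2-q)/2$ pairs $\{y,z\}$, where each pair completes a triangle $\{v,y,z\}$; since a self-polar triangle has no absolute vertex, $y$ and $z$ indeed lie in $\Gamma$, so this is a genuine pairing of $N(v)$ inside $\Gamma$. If $S$ induces a triangle-free subgraph and $v\in S$, then $S$ cannot contain both members of any such pair, whence $|N(v)\cap S|\le (q^2-q)/2$. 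Summing over $v\in S$, the number $m$ of edges of $\Gamma[S]$ satisfies $m\le\tfrac14(q^2-q)|S|$.

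Next I would combine this with the lower bound on $m$ delivered by the expander mixing lemma applied to $\Gamma$ (so $d=q^2-q$, $\lambda=q$, $n=q^4-q^3+q^2$). Writing $s=|S|$, the lemma gives $2m\ge\frac{(q^2-q)s^2}{n}-qs\bigl(1-\tfrac{s}{n}\bigr)$; comparing with $2m\le\tfrac12(q^2-q)s$, cancelling a factor $s$ (the claim being trivial if $S=\emptyset$) and tidying up gives $\frac{q^2 s}{n}\le\frac{q^2+q}{2}$. Since $n=q^2(q^2-q+1)$, this is precisely $s\le(q^2-q+1)\cdot\tfrac{q^2+q}{2}=\frac{q^4+q}{2}$, as claimed.

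I do not expect a real obstacle here. The one genuinely delicate ingredient — that the second eigenvalue of $\Gamma$ is $q$ and not larger — has already been secured by interlacing, and everything else is bookkeeping: keeping track that the third vertex of each triangle through $v$ is again non-absolute, and using the \emph{improved} form of the mixing lemma with the factor $1-s/n$, which is exactly what sharpens the earlier bound of Loucks and Timmons to the one asserted. The supplementary statement in Theorem~\ref{Th1} about the possible values of $|\ell\cap S|$ when equality holds would then follow by analysing the equality cases of these inequalities, but that lies beyond Proposition~\ref{upper bound} itself.
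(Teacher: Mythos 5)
Your proposal is correct and follows essentially the same route as the paper: bound $2e(S)\le\frac{q^2-q}{2}|S|$ via the pairing of non-absolute neighbours into triangles, then play this off against the lower bound from the expander mixing lemma applied to the $(q^2-q)$-regular graph $\Gamma$ with $\lambda=q$, and solve for $|S|$. Your explicit algebra (and the observation that no auxiliary assumption on the size of $S$ is needed, since the inequality becomes linear after dividing by $|S|$) matches the paper's computation.
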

	\begin{proof}
		Applying the expander mixing lemma to $\Gamma$, we find
		\begin{align}\label{emlopnonabs}
		\left\vert 2e(S)-\frac{(q^2-q)|S|^2}{q^4-q^3+q^2}\right\vert \leq q|S|\left(1-\frac{|S|}{q^4-q^3+q^2}\right).
		\end{align}		
		Recall that every vertex $x$ is adjacent to $q^2-q$ vertices in $\Gamma$, which come in pairs to form $(q^2-q)/2$ triangles with common vertex $x$. Suppose that $x \in S$, then $x$ can be adjacent to at most one vertex of each triangle. This implies that $x$ has at most $(q^2-q)/2$ neighbours in $S$ and hence
		\[2e(S) = \sum_{x \in S}d_S(x) \leq \frac{q^2-q}{2}|S|,\]
		where $d_S(x)$ denotes the number of neighbours in $S$ of a vertex $x$. We can assume that $|S| \geq (q^4-q^3+q^2)/2$, otherwise the proposition is vacuously true. Then \eqref{emlopnonabs} becomes
		\begin{align*}
		\frac{(q^2-q)|S|^2}{q^4-q^3+q^2}-q|S|\left(1-\frac{|S|}{q^4-q^3+q^2}\right) \leq 2e(S) \leq \frac{q^2-q}{2}|S|.
		\end{align*}	
		Now solving the previous inequality for $|S|$ proves the result.
	\end{proof}
	
	The next step is to find out whether the upper bound can be attained. Assume that equality holds. From the proof of Proposition \ref{upper bound}, we have that each vertex $x \in S$ has degree $d_S(x) = (q^2-q)/2$. From a geometrical point of view, this means that if $x \in S$, then $|x^\perp \cap S| = (q^2-q)/2$, i.e., a certain number of lines intersect the set $S$ in a constant number of points. By again using eigenvalue interlacing, we will show that an even stronger property holds: for \textit{any} line $\ell$ we have \[|\ell \cap S| \in \verz{\frac{q^2-q}{2},\frac{q^2+q}{2}},\]
	i.e. $S$ is a {\em two-intersection set}. These point sets have been intensively studied in the literature, see for example \cite{CK, PR} and references therein. 
	
	\begin{defin}
		The interlacing of two sequences of real numbers $\lambda_1 \geq \dots \geq \lambda_n$ and $\mu_1 \geq \dots \geq \mu_m$, $m < n$, is \textbf{tight} if there exists $0 \leq k \leq m$ such that 
		\[\lambda_i = \mu_i \,\,\text{ for } 1 \leq i \leq k \text{ and } \lambda_{n-m+i} = \mu_i \,\,\text{ for } k+1 \leq i \leq m. \]
	\end{defin}
	
	\begin{theorem}\cite[p596 Corollary 2.3]{H}\label{tightinterlacing}	
		Let $A$ be a symmetric $n \times n$ matrix partitioned as
		\[A = \begin{pmatrix}
		A_{1,1} & \dots & A_{1,m} \\
		\vdots & & \vdots \\
		A_{m,1} & \dots & A_{m,m}
		\end{pmatrix},\]
		such that $A_{i,i}$ is a square matrix for all $1 \leq i \leq m$.
		The \textbf{quotient matrix} $B$ is the $m \times m$ matrix with entries the average row sums of the blocks of $A$. More precisely, 
		$$
		B = (b_{i,j}),  \; b_{i,j} = \frac{1}{n_i} {\bf 1}^t A_{i,j} {\bf 1} ,
		$$
		where ${\bf 1}$ denotes the all one column vector and $n_i$ is the number of rows of $A_{i,j}$. 
		Then the following holds
		\begin{enumerate}
			\item The eigenvalues of $B$ interlace those of $A$;
			\item if the interlacing is tight, then $A_{i,j}$ has constant row and column sums for $1 \leq i,j \leq m$.
		\end{enumerate}
	\end{theorem}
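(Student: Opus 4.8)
The plan is to deduce Theorem~\ref{tightinterlacing} from the classical Cauchy interlacing theorem for principal submatrices of a symmetric matrix, by passing to the normalised characteristic matrix of the partition. Write $V_1,\dots,V_m$ for the index sets of the diagonal blocks, set $n_i=|V_i|$, and let $\mathbf{1}_i\in\mathbb{R}^n$ be the characteristic vector of $V_i$; these vectors are pairwise orthogonal with $\|\mathbf{1}_i\|^2=n_i$. I would let $U$ be the $n\times m$ matrix whose columns are $u_i=\mathbf{1}_i/\sqrt{n_i}$, so that $U^tU=I_m$. A short computation gives $(U^tAU)_{i,j}=(n_in_j)^{-1/2}\,\mathbf{1}_i^tA\mathbf{1}_j=(n_i/n_j)^{1/2}\,b_{i,j}$, hence $\hat{B}:=U^tAU=D^{1/2}BD^{-1/2}$ with $D=\mathrm{diag}(n_1,\dots,n_m)$. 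Thus $\hat B$ is symmetric (being a compression of the symmetric $A$) and has exactly the eigenvalues of $B$, so it suffices to prove both assertions with $B$ replaced by $\hat B$.

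For part~(1), I would extend the orthonormal columns of $U$ to an orthonormal basis of $\mathbb{R}^n$, i.e.\ choose $U'$ so that $Q=[\,U\mid U'\,]$ is orthogonal. Then $M:=Q^tAQ$ is symmetric with the same eigenvalues $\lambda_1\ge\dots\ge\lambda_n$ as $A$, and $\hat B$ is its leading $m\times m$ principal submatrix; Cauchy interlacing then yields $\lambda_i\ge\mu_i\ge\lambda_{n-m+i}$ for $1\le i\le m$, where $\mu_1\ge\dots\ge\mu_m$ are the eigenvalues of $\hat B$. Equivalently one can run the Courant--Fischer min--max characterisation directly on the subspace $\mathrm{col}(U)$.

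For part~(2), assume the interlacing is tight with index $k$, fix orthonormal eigenvectors $v_1,\dots,v_m$ of $\hat B$ with $\hat Bv_i=\mu_iv_i$, and set $y_i=Uv_i$; since $U$ has orthonormal columns, $y_1,\dots,y_m$ is an orthonormal basis of $\mathrm{col}(U)=\mathrm{span}(\mathbf{1}_1,\dots,\mathbf{1}_m)$ and $y_i^tAy_j=v_i^t\hat Bv_j=\mu_i\delta_{i,j}$. The key claim is that each $y_i$ is an eigenvector of $A$ for the eigenvalue $\mu_i$. For $i\le k$ I would argue by induction: if $y_1,\dots,y_{i-1}$ are eigenvectors for $\lambda_1,\dots,\lambda_{i-1}$, then $Y_{i-1}=\mathrm{span}(y_1,\dots,y_{i-1})$ is $A$-invariant, so $A$ restricted to $Y_{i-1}^{\perp}$ has eigenvalues $\lambda_i\ge\dots\ge\lambda_n$ and hence largest Rayleigh quotient $\lambda_i$, attained exactly on its $\lambda_i$-eigenspace; as $y_i\in Y_{i-1}^{\perp}$ has Rayleigh quotient $\mu_i=\lambda_i$ by tightness, it must be a $\lambda_i$-eigenvector of $A$. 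A symmetric argument starting from the bottom of the spectrum handles the indices $i>k$ (with eigenvalues $\lambda_n,\lambda_{n-1},\dots$), and the two families together exhaust all $m$ vectors $y_i$. Hence $\mathrm{col}(U)$ has an orthonormal basis of eigenvectors of $A$, so it is $A$-invariant; writing $A\mathbf{1}_j=\sum_i c_{i,j}\mathbf{1}_i$ and reading off a coordinate $x\in V_i$ shows that the row of $A_{i,j}$ indexed by $x$ sums to $c_{i,j}$, independent of $x$. Exchanging the roles of $i$ and $j$ (and using $A_{j,i}=A_{i,j}^t$) gives constant column sums as well, which is part~(2).

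The routine parts are the identity for $(U^tAU)_{i,j}$ and checking that $y_1,\dots,y_m$ is an orthonormal basis of $\mathrm{col}(U)$. The hard part will be the inductive step in the tight case: one must be careful with eigenvalue multiplicities, verifying that stripping off an $A$-invariant subspace carrying the eigenvalues $\lambda_1,\dots,\lambda_{i-1}$ really does leave a space whose top eigenvalue is $\lambda_i$ — even when several of these coincide — and that the value $\lambda_i$ of the Rayleigh quotient on $Y_{i-1}^{\perp}$ is attained only by genuine $\lambda_i$-eigenvectors. It is precisely here that the hypothesis of \emph{tight} (rather than plain) interlacing does its work.
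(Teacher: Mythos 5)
The paper does not prove this statement itself (it is quoted from Haemers \cite{H}), and your argument is correct and essentially reproduces Haemers' own proof: compress $A$ to $\hat{B}=U^{t}AU$ via the normalised characteristic matrix, get part (1) from Cauchy interlacing (equivalently Courant--Fischer), and in the tight case lift the eigenvectors of $\hat{B}$ to eigenvectors of $A$, so that $\mathrm{col}(U)$ is $A$-invariant and the constant row and column sums of the blocks follow. Your handling of the delicate inductive step (multiplicities, and equality in the Rayleigh quotient forcing membership in the extremal eigenspace) is sound, so there is nothing to correct.
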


	\begin{lemma} 
		Let $S$ be a subset of non-absolute vertices of $\UP$ inducing a triangle-free subgraph, with $|S| = (q^4+q)/2$. Then, if $x$ is a point of $\pi$, we have that 
$$
|x^\perp \cap S| = 
\begin{cases}
\frac{q^2-q}{2} & \mbox{if} \; x \in S , \\ 
\frac{q^2+q}{2} & \mbox{if} \; x \notin S .
\end{cases} 
$$
	\end{lemma}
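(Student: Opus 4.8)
The plan is to build on the information already extracted in the equality case of Proposition~\ref{upper bound} and then feed a suitable three‑part partition into the quotient–matrix machinery of Theorem~\ref{tightinterlacing}. First recall from the proof of Proposition~\ref{upper bound} that in the equality case every $x\in S$ has $d_S(x)=(q^2-q)/2$; since $x$ is non‑absolute we have $x\notin x^\perp$, so $|x^\perp\cap S|=(q^2-q)/2$, which already settles the case $x\in S$. Moreover, as recalled in Section~\ref{section1}, every non‑absolute point is adjacent to exactly $q+1$ absolute points and to exactly $q^2-q$ non‑absolute points; hence each $x\in S$ has exactly $(q^2-q)/2$ non‑absolute neighbours outside $S$ and exactly $q+1$ absolute neighbours.

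Next I would partition the points of $\pi$ into $S$, the set $T$ of non‑absolute points not in $S$, and the set $U$ of the $q^3+1$ absolute points, and consider the induced block partition of the (loop‑carrying, but symmetric) adjacency matrix $M$ of $\UP$, whose eigenvalues are $q^2+1, q, -q$ with the multiplicities recorded before Proposition~\ref{upper bound}. The three rows of the quotient matrix $B$ indexed by the blocks are computed as follows: the row corresponding to $S$ is read off from the previous paragraph (entries $(q^2-q)/2,\ (q^2-q)/2,\ q+1$); the rows corresponding to $T$ and to $U$ require averages, which one gets from the double counts $e(S,T)=\tfrac{q^2-q}{2}|S|$ and $e(S,U)=(q+1)|S|$, from the cardinalities of $T$ and $U$, and from the facts that every non‑absolute point has $q+1$ absolute neighbours while every absolute point has exactly one absolute neighbour (itself, via its tangent line $p^\perp$) and $q^2$ non‑absolute neighbours. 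A short computation then gives $B{\bf 1}=(q^2+1){\bf 1}$ and that $B+qI$ has rank $1$, so the eigenvalues of $B$ are $q^2+1$ and $-q$, the latter with multiplicity $2$.

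Now compare with the spectrum of $M$: its largest eigenvalue is $q^2+1$, its smallest is $-q$, and $-q$ occurs with multiplicity $(q^4+q)/2\geq 2$. By part (1) of Theorem~\ref{tightinterlacing} the eigenvalues $\mu_1\geq\mu_2\geq\mu_3$ of $B$ interlace the eigenvalues $\lambda_1\geq\cdots\geq\lambda_n$ of $M$; since $\mu_1=q^2+1=\lambda_1$ and $\mu_2=\mu_3=-q=\lambda_{n-1}=\lambda_n$, the interlacing is tight (with $k=1$ in the definition of tightness). Hence part (2) of Theorem~\ref{tightinterlacing} applies and every block of $M$ has constant row sums. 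Reading off the block in the rows indexed by $T$ and the columns indexed by $S$ shows that every non‑absolute point $x\notin S$ has exactly $(q^2+q)/2$ neighbours in $S$, i.e.\ $|x^\perp\cap S|=(q^2+q)/2$; reading off the block with rows indexed by $U$ and columns indexed by $S$ gives the same conclusion for an absolute point $x$, because such an $x$ lies on $x^\perp$ but is not in $S$, so its $S$‑neighbours are again exactly the points of $x^\perp\cap S$. This is precisely the claimed dichotomy.

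The technical heart is the computation of the quotient matrix $B$ — getting the average row sums over $T$ and over $U$ correct, which is exactly where the cardinalities of $T,U$ and the double counts $e(S,T),e(S,U)$ enter — followed by the verification that the resulting interlacing is genuinely tight against the known spectrum of $\UP$; the rest is a direct appeal to Theorem~\ref{tightinterlacing}. If one prefers to avoid the loops, an alternative is to apply the two‑part partition $\{S,\ V(\Gamma)\setminus S\}$ to the simple graph $\Gamma$ of Section~\ref{section1} (which yields the statement for non‑absolute $x$ only) and then treat an absolute point $p$ by double counting the incidences between $S$ and the $q^2$ secant lines through $p$, which are precisely the polar lines of the $q^2$ points of $p^\perp\setminus\{p\}$; both routes terminate at the same dichotomy.
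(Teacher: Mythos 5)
Your proposal is correct and follows essentially the same route as the paper: the same three-part partition (absolute points, $S$, and the remaining non-absolute points), the same quotient matrix with eigenvalues $q^2+1,-q,-q$, tight interlacing against the spectrum of the loop-carrying adjacency matrix, and the conclusion from constant row sums of the blocks via Theorem \ref{tightinterlacing}. The only cosmetic differences are that you settle the case $x \in S$ explicitly from the equality case of Proposition \ref{upper bound} (which the paper also uses, as the entry $|S|^{-1}2e(S)=(q^2-q)/2$) and that you note the rank-one structure of $B+qI$ instead of listing the matrix entries.
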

	\begin{proof}
	Let $A$ be the adjacency matrix of $\UP$. Recall that $\UP$ has loops, and the diagonal entries of $A$ are ones. We can partition the points of $\pi$ into three sets: the set of absolute points $\cU$, the set of interest $S$ and their complement $R$. Hence $|R| = q(q-1)(q^2-q+1)/2$. By considering the above partition, after reordering rows and columns, we get 
$$
A = \begin{pmatrix}
	A_{1,1} & A_{1,2} & A_{1,3} \\
	A_{2,1} & A_{2,2} & A_{2,3} \\
	A_{3,1} & A_{3,2} & A_{3,3} 
	\end{pmatrix}.
$$
Consider the quotient matrix 
$$
B = \begin{pmatrix}
	|\cU|^{-1} 2e(\cU) & |\cU|^{-1} e(\cU, S) & |\cU|^{-1} e(\cU, R) \\
	|S|^{-1} e(S, \cU) & |S|^{-1} 2 e(S) & |S|^{-1} e(S, R) \\
	|R|^{-1} e(R, \cU) & |R|^{-1} e(R, S) & |R|^{-1} 2 e(R)
	\end{pmatrix} .
$$ 
We already know a few entries. Indeed, by hypothesis, $|S|^{-1} 2 e(S) = (q^2-q)/2$. Moreover, every absolute point is adjacent with exactly one absolute point (itself), so $|\cU|^{-1} 2e(\cU) = 1$ and every non-absolute point is adjacent with $q+1$ absolute points, hence $|S|^{-1} e(S, \cU) = |R|^{-1} e(R, \cU) = q+1$. Analogously, since every point of $S$ is adjacent with exactly $(q^2-q)/2$ points of $R$ we get $|S|^{-1} e(S, R) = (q^2-q)/2$. As $e(\cU,S) = e(S,\cU)$, it follows that $|\cU|^{-1} e(\cU, S) = (q^2+q)/2$, and similarly $|\cU|^{-1} e(\cU, R) = (q^2-q)/2$ and $|R|^{-1} e(R, S) = (q^2+q)/2$. Lastly, since the sum of the elements of a row of $A$ equals $q^2+1$, we obtain $|R|^{-1} 2 e(R) = (q^2-3q)/2$. Collecting these values gives
	\[B = \begin{pmatrix}
	1 & \frac{q^2+q}{2} & \frac{q^2-q}{2} \\
	q+1 & \frac{q^2-q}{2} & \frac{q^2-q}{2} \\
	q+1 & \frac{q^2+q}{2} & \frac{q^2-3q}{2} 
	\end{pmatrix}.\]
	
	The eigenvalues of $B$ are $q^2+1,-q,-q$, which shows that the interlacing is tight. By Theorem \ref{tightinterlacing}, every block $A_{i,j}$ has constant row sum and constant column sum. This means that every vertex in $\cU$ or $R$ is adjacent to precisely $(q^2+q)/2$ vertices in $S$.
	\end{proof}

\begin{remark}
	Let $S$ be a subset of non-absolute vertices of $\UP$ inducing a triangle-free subgraph, with $|S| = (q^4+q)/2$. Then, in the language of \cite{DS}, we have that $S$ is an intriguing set of $\Gamma$, which could also be shown using Proposition 3.8 of that article.
\end{remark}

	

\begin{remark}\label{strictinequalityforsmallq}
	Let $S$ be a subset of non-absolute vertices of $\UP$ inducing a triangle-free subgraph such that $|S| = (q^4+q)/2$. Then the set $S \cup \cU$ is a two-intersection set. Indeed, if $x \in S$, then $|x^\perp \cap S| = (q^2-q)/2$ and hence $|x^\perp \cap (S \cup \cU)| = (q^2+q)/2 +1$. On the other hand, if $x \notin S$, then $|x^\perp \cap S| = (q^2+q)/2$ and hence $|x^\perp \cap (S \, \cup \, \cU)|$ equals either $(q^2+q)/2 + 1$ or $(q^2+q)/2 + q + 1$, according as $x \in \cU$ or $x \notin \cU$. It follows that $S \cup \cU$ is a set of $(q+2)(q^3+1)/2$ points such that every line meets $S \cup \, \cU$ is either $(q^2+q+2)/2$ or $(q^2+3q+2)/2$ points. Since no such a set exists in ${\rm PG}(2,4)$ or in ${\rm PG}(2,9)$, see \cite{PR}, it follows that in these cases the upper bound of Proposition \ref{upper bound} cannot be attained. 
\end{remark}
	
	\section{The Desarguesian plane}
	
	Let $\pi$ be the Desarguesian projective plane $\PG$, with $q = p^h$, $p$ a prime, $h$ a positive integer. The set of absolute points of a unitary polarity of $\PG$ is called a {\em Hermitian curve}. In this case, if $q$ is even, by means of constructive arguments, we are able to show a lower bound close to the upper bound of Theorem \ref{Th1}. In particular we will prove the existence of a triangle-free subgraph of $\DUP$ having $q^4/2$ vertices, see Theorem \ref{Th2}. The strategy is the following: we will fix a unitary polarity $\perp$ and hence a Hermitian curve $\cU$; we will consider a set $\cP$ containing $q$ Hermitian curves such that $\cU$ belongs to $\cP$ and elements in $\cP$ pairwise intersect at a common point. Then we will select $q/2$ Hermitian curves in $\cP \setminus \{\cU\}$ and show that the set of points covered by these Hermitian curves distinct from their common point possesses the required properties.
	
	\subsection{A lower bound}  \label{ex}

	 The projective plane $\PG$ will be represented via homogeneous coordinates over the Galois field $\Fq$, i.e., represent the points of $\PG$ by $\langle (x, y, z)\rangle$, $x, y, z \in \Fq$, $(x, y, z) \ne (0, 0, 0)$, and similarly lines by $\langle [a, b, c] \rangle$, $a, b, c \in \Fq$, $[a, b, c] \ne [0, 0, 0]$. Incidence is given by $ax + by + cz = 0$. To avoid awkward notation the angle brackets will be dropped in what follows. The group consisting of all projectivities of $\PG$ is denoted by $\PGL(3,q^2)$. The point $U_i$ is the point with $1$ in the $i$-th position and $0$ elsewhere. As any two Hermitian curves are projectively equivalent \cite[Chapter 5]{JH}, we may assume that $\cU$ has equation
	$$
	X^qY+XY^q+Z^{q+1}=0.
	$$ 
	In other words, the matrix defining the polarity is the matrix
	\[P =\begin{pmatrix} 0 & 1 & 0 \\ 1 & 0 & 0 \\ 0 & 0 & 1 \end{pmatrix},\] two vertices $(x_1,y_1,z_1)$ and $(x_2,y_2,z_2)$ in $\DUP$ are adjacent if and only if
	\[\begin{pmatrix} x_1 & y_1 & z_1 \end{pmatrix}\begin{pmatrix} 0 & 1 & 0 \\ 1 & 0 & 0 \\ 0 & 0 & 1 \end{pmatrix}\begin{pmatrix} x_2^q \\ y_2^q \\ z_2^q \end{pmatrix} = 0 ,\] and $$(x, y, z)^\perp = [y^q, x^q, z^q] .$$ 
	Let $G \cong \PGU(3,q^2)$ be the subgroup of $\PGL(3,q^2)$ leaving $\cU$ invariant. We shall find it helpful to work with the elements of $\PGL(3,q^2)$ as matrices in ${\rm GL}(3,q^2)$ and the points of $\PG$ as column vectors, with matrices acting on the left.
	
	The point $U_2$ clearly belongs to $\cU$ and its polar line $U_2^\perp$ is the line $\ell : X = 0$. A pencil of Hermitian curves of $\PG$ generated by the Hermitian curves $\cal H, \cal H'$ with equations $F = 0$ and $F'= 0$, respectively, is the set of unitals defined by $\alpha F + \beta F'=0$, where $\alpha,\beta \in \Fq$, $(\alpha,\beta) \neq (0,0)$. We can consider the pencil $\cP$ generated by $\cU$ and the degenerate Hermitian curve $\ell$ defined by $X^{q+1}=0$. After normalization, this pencil contains the $q$ Hermitian curves defined by
	
	\[\cU_\lambda: \lambda X^{q+1} + X^qY+XY^q+Z^{q+1}=0,\]
	
	where $\lambda \in \Fq$. Clearly, when $\lambda =0$ we see that $\cU_0=\cU$, while for ``$\lambda = \infty$'' we retrieve the line $\ell$, which will be of lesser importance. Every point not on the line $\ell$ belongs to exactly one Hermitian curve of $\cP$, while the point $U_2$ is common to all Hermitian curves in $\cP$.

	\begin{lemma}\label{group}
	There exists a subgroup $K$ of $G$ of order $q^3$ acting regularly on points of \,$\cU_{\lambda} \setminus \{U_2\}, \lambda \in \Fq$. 
	\end{lemma}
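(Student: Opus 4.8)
The plan is to exhibit an explicit subgroup $K$ of $G \cong \PGU(3,q^2)$ of order $q^3$ that stabilizes the point $U_2$ and each Hermitian curve $\cU_\lambda$, and then to check that its action on $\cU_\lambda \setminus \{U_2\}$ is sharply transitive. Since $U_2$ is an absolute point lying on every $\cU_\lambda$, it is natural to look inside the stabilizer $G_{U_2}$: this is a maximal parabolic subgroup of $\PGU(3,q^2)$, and it contains a normal Sylow $p$-subgroup of order $q^3$ which acts regularly on the $q^3$ points of $\cU \setminus \{U_2\}$ (the affine points of the Hermitian curve with their distinguished point at infinity removed). I would take $K$ to be exactly this unipotent subgroup. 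Concretely, writing the polar form as $X^qY + XY^q + Z^{q+1}$, the maps fixing $U_2 = (0,1,0)$ and fixing the tangent line $\ell = U_2^\perp : X = 0$ pointwise-on-the-flag should be given, up to the usual identifications, by matrices of the shape
\[
M_{a,b} = \begin{pmatrix} 1 & 0 & 0 \\ a & 1 & b \\ 0 & 0 & 1 \end{pmatrix}' \quad\text{(with }b^q + b + a^{q+1} = 0\text{, suitably adjusted),}
\]
so that $K = \{ M_{a,b} \}$ has order $q^3$: the parameter $a$ ranges over $\Fq$ (which here means $\mathbb{F}_{q^2}$ in the ground field, giving $q^2$ choices) and, for each $a$, the relation pins down $b$ up to the $q$ solutions of an additive equation, for a total of $q^2 \cdot q = q^3$. (I would fix the precise entries by directly imposing $M^t P M^{(q)} = P$, i.e.\ membership in $G$.)

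The key steps, in order, are: (i) verify $K \le G$ by checking the defining unitary relation on the matrices $M_{a,b}$, and that $|K| = q^3$ by counting solutions $(a,b)$ of the constraint; (ii) verify $K$ fixes $U_2$ and stabilizes $\ell$, hence fixes the pencil $\cP$ setwise, and in fact fixes each $\cU_\lambda$ — this last point follows because $K$ fixes $U_2$ and its tangent line $\ell$, and the pencil $\cP$ is precisely the pencil of Hermitian curves through the flag $(U_2, \ell)$ with a further common condition, so any element of $G$ fixing that flag permutes $\cP$ fixing the two degenerate members $\cU$-side and $\ell$, and a short argument on the action on the parameter $\lambda$ shows it must fix every $\cU_\lambda$; alternatively one checks directly that $M_{a,b}$ sends the form $\lambda X^{q+1} + X^qY + XY^q + Z^{q+1}$ to a scalar multiple of itself with the same $\lambda$; (iii) show the action of $K$ on $\cU_\lambda \setminus \{U_2\}$ is transitive — since $|\cU_\lambda \setminus \{U_2\}| = q^3 = |K|$, transitivity is equivalent to sharp transitivity, equivalently to triviality of the stabilizer of one point; so it suffices to exhibit one point $P_0 \in \cU_\lambda \setminus \{U_2\}$ whose $K$-stabilizer is trivial, or equivalently to show the orbit map $K \to \cU_\lambda \setminus \{U_2\}$, $M \mapsto M \cdot P_0$, is injective.

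For step (iii) I would pick a convenient affine point of $\cU_\lambda$, say a point of the form $P_0 = (1, y_0, 0)$ or $(1, y_0, z_0)$ with $\lambda + y_0^q + y_0 + z_0^{q+1} = 0$, and compute $M_{a,b} \cdot P_0$ explicitly; reading off the first coordinate and then the remaining ones should show that distinct $(a,b)$ give distinct images, because the parameters $a,b$ appear ``triangularly'' in the coordinates of $M_{a,b} P_0$. The main obstacle I anticipate is purely bookkeeping: getting the exact matrix form of $K$ right so that it simultaneously lies in $\PGU(3,q^2)$ \emph{and} visibly fixes $U_2$ and $\ell$, since the Hermitian form here is the off-diagonal one $X^qY + XY^q + Z^{q+1}$ rather than the diagonal normalization, so the unipotent radical does not have the textbook upper-triangular shape and the defining relation between $a$ and $b$ must be derived carefully. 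Once the group is correctly pinned down, steps (i)–(iii) are short computations. A cleaner alternative avoiding coordinates altogether: invoke the known structure of the point stabilizer in $\PGU(3,q^2)$ of an absolute point — its unipotent radical has order $q^3$ and acts regularly on the affine part of the Hermitian curve — and then note that this radical fixes the tangent line, hence stabilizes the whole pencil $\cP$ and (by a parameter-count or direct check) each $\cU_\lambda$, and that $\cU_\lambda \setminus \{U_2\}$ also has $q^3$ points forming a single orbit by the same regularity argument applied to $\cU_\lambda$ in place of $\cU$.
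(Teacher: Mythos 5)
Your proposal is correct in outline and follows essentially the same route as the paper: there $K$ is taken to be exactly this unipotent subgroup fixing the flag $(U_2,\ell)$, written explicitly as the matrices $\begin{pmatrix} 1 & 0 & 0 \\ a & 1 & -b^q \\ b & 0 & 1 \end{pmatrix}$ with $(1,a,b)\in\cU$ (i.e.\ $a+a^q+b^{q+1}=0$, giving $q^3$ elements), and one verifies by direct computation that each $\cU_\lambda$ is preserved and that point stabilizers on $\cU_\lambda\setminus\{U_2\}$ are trivial, so regularity follows from $|K|=q^3=|\cU_\lambda\setminus\{U_2\}|$, exactly as in your steps (i)--(iii). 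Two corrections you should make when executing the plan: your displayed shape with $(3,1)$-entry $0$ cannot be ``suitably adjusted'' into a unitary subgroup of order $q^3$ (imposing $M^tPM^{(q)}=P$ on that shape forces $b=0$ and $a+a^q=0$, leaving only $q$ elements, so the third row must carry the second parameter as above), and your subsidiary claim that every element of $G$ fixing the flag $(U_2,\ell)$ fixes each $\cU_\lambda$ is false (the torus part of the flag stabilizer rescales the pencil parameter $\lambda$ by a norm in $\Fqstar$), so of your two proposed justifications for step (ii) only the direct check that $M_{a,b}$ preserves the form $\lambda X^{q+1}+X^qY+XY^q+Z^{q+1}$ — the computation the paper performs — is valid.
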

	\begin{proof}
	Here, we shall consider the points of $\PG$ as column vectors, with matrices acting on the left. Let $K$ be the subgroup of $G$ whose elements are associated with the following matrices
	$$
	\begin{pmatrix}
	1 & 0 & 0 \\
	a & 1 & -b^q \\
	b & 0 & 1 
	\end{pmatrix}.
	$$
where $(1,a,b) \in \cU$. Then $K$ is a group of order $q^3$. Straightforward calculations show that if $P \in \cU_{\lambda}$ and $g \in K$, then $P^g \in \cU_{\lambda}$ and that the stabilizer in $K$ of a point $P \in \cU_{\lambda} \setminus \{U_2\}$ is trivial.     
	\end{proof}

	\begin{prop}\label{condition}
	Let $\lambda_1, \lambda_2, \lambda_3 \in \Fq \setminus \{0\}$, not necessarily distinct. If \, $\cU_{\lambda_1} \cup \, \cU_{\lambda_2} \cup \cU_{\lambda_3}$ contains a triangle, not containing the common absolute point $U_2$, then 
\begin{equation} \label{cond}
	\lambda_1\lambda_2 + \lambda_2\lambda_3 + \lambda_1\lambda_3 = 0 .
\end{equation} 
	\end{prop}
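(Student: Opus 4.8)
The plan is to use the identification from Section~\ref{section1} of a triangle of $\UP$ with a Hermitian‑orthogonal basis of $\F_{q^2}^3$, and then to recover the relation \eqref{cond} simply by expanding the distinguished vector $U_2$ in that basis. Throughout, write $H(u,v) = u_1 v_2^q + u_2 v_1^q + u_3 v_3^q$ for the Hermitian form attached to $\perp$, so that two points $u,v$ are adjacent exactly when $H(u,v) = 0$, a point $u$ is absolute exactly when $H(u,u) = 0$, and $\cU_\lambda$ consists of the points $u = (x,y,z)$ with $\lambda x^{q+1} + H(u,u) = 0$.

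First I would choose representatives $P_1, P_2, P_3$ for the vertices of the triangle with $P_i \in \cU_{\lambda_i}$. Since $P_i$ is non-absolute (triangles contain no absolute points) and $\lambda_i \neq 0$, the relation $\lambda_i x_i^{q+1} = -H(P_i, P_i) \neq 0$ forces $x_i \neq 0$, so after rescaling I may assume $x_i = 1$, and then $H(P_i, P_i) = -\lambda_i$. Next I would note that $\{P_1, P_2, P_3\}$ is an $H$-orthogonal basis of $\F_{q^2}^3$: pairwise orthogonality is exactly pairwise adjacency, and pairwise orthogonal anisotropic vectors are automatically linearly independent (apply $H(\,\cdot\,, P_j)$ to a vanishing linear combination). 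Consequently every vector $v$ expands as $v = \sum_{i=1}^3 H(v, P_i)\, H(P_i, P_i)^{-1} P_i$. Applying this with $v = U_2 = (0,1,0)$, and using $H(U_2, P_i) = x_i^q = 1$, gives $U_2 = -\sum_{i=1}^3 \lambda_i^{-1} P_i$. Comparing first coordinates — which are $0$ on the left and equal to $1$ in each $P_i$ — yields $\lambda_1^{-1} + \lambda_2^{-1} + \lambda_3^{-1} = 0$, and multiplying through by $\lambda_1 \lambda_2 \lambda_3 \neq 0$ is precisely \eqref{cond}.

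I do not expect a real obstacle here; the only points needing care are the normalization above and the legitimacy of the labelling $P_i \in \cU_{\lambda_i}$. For the latter: since $x_i \neq 0$, each vertex lies off the line $\ell : X = 0$, and every point off $\ell$ lies on a unique member of the pencil $\cP$, so attaching a parameter $\lambda_i$ to each vertex is well defined; the $\lambda_i$ need not be distinct, in accordance with the statement. The rest is routine linear algebra over $\F_{q^2}$, the only delicate part being the semilinear bookkeeping for $H$. (If one wished to avoid the basis language altogether, one could instead write the three adjacencies and the three curve memberships as six scalar identities in the nine coordinates and eliminate, but this is more cumbersome and less transparent.)
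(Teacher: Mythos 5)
Your argument is correct, but it takes a genuinely different route from the paper's. The paper proves Proposition \ref{condition} by brute normalization: using the regular action of the order-$q^3$ group of Lemma \ref{group} it places the first vertex at $(1,x_1,0)\in\cU_{\lambda_1}$, then determines the other two vertices explicitly from the polar-line conditions, obtaining the scalar relations \eqref{eq1}--\eqref{eq4}, and eliminates (multiplying \eqref{eq4} by $y_2y_3^q$) to reach \eqref{cond}. You instead use that a triangle of $\UP$ is a self-polar triangle, i.e.\ after scaling each representative to have first coordinate $1$ (legitimate, as you check, since $\lambda_i\neq 0$ and non-absoluteness force $x_i\neq 0$, whence $H(P_i,P_i)=-\lambda_i$) the three vertices form an $H$-orthogonal basis of $\F_{q^2}^3$; expanding $U_2$ in this basis and reading off the first coordinate gives $\lambda_1^{-1}+\lambda_2^{-1}+\lambda_3^{-1}=0$, which is \eqref{cond} after clearing denominators. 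Your version needs neither Lemma \ref{group} nor any elimination, is essentially coordinate-free apart from the evaluation $H(U_2,P_i)=x_i^q$, and has the pleasant feature of landing directly on the ``sum of inverses'' form that the paper only recovers afterwards via Lemma \ref{restatement}; the paper's computation is heavier, but its normalization through the group action is machinery the authors reuse later (e.g.\ in the regularity and girth-$5$ arguments), so it is not wasted there. Your handling of the one delicate point of the statement --- attaching a well-defined parameter $\lambda_i$ to each vertex because the vertices lie off $\ell$ and the pencil partitions the points off $\ell$ --- matches the paper's implicit reading of the hypothesis, so no gap arises.
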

	\begin{proof}
	Suppose that we do have three points $P, Q, R$ forming a triangle and contained in $\cU_{\lambda_1} \cup \, \cU_{\lambda_2} \cup \, \cU_{\lambda_3}$. Taking into account Lemma \ref{group}, we may assume that $P$ is the point $(1,x_1,0) \in \cU_{\lambda_1}$, where	
	\begin{equation}\label{eq1} \lambda_1+x_1+x_1^q=0. \end{equation}
	The second point $Q = (1,x_2,y_2) \in \cU_{\lambda_2}$ has to be on the line $P^\perp :  x_1^q X + Y = 0$, which implies that $x_2 = -x_1^q$. Here we find by \eqref{eq1} that 	
	\begin{equation}\label{eq2} \lambda_2-x_1^q-x_1+y_2^{q+1}  = \lambda_1 + \lambda_2 +y_2^{q+1} = 0. \end{equation}
	For the third point $R$, we find in the same way that it should be of the form $(1,-x_1^q,y_3) \in \cU_{\lambda_3}$, where 
	\begin{equation}\label{eq3} \lambda_1 + \lambda_3 + y_3 ^{q+1} = 0.  \end{equation}
	Moreover, $R \in Q^\perp$, where $Q^\perp : -x_1 X + Y + y_2^q Z = 0$. This implies that 
	\begin{equation}\label{eq4} -x_1 - x_1^q+y_2^qy_3= \lambda_1+y_2^qy_3=0.  \end{equation}
	Remark that $y_2$ nor $y_3$ can equal zero, for otherwise one of $Q$ or $R$ would have to be $U_2$. Multiplying this last equation by $y_2y_3^q$ and using \eqref{eq2}, \eqref{eq3}, \eqref{eq4}, we obtain that
	$$
	\lambda_1y_2y_3^q + y_2^{q+1}y_3^{q+1} = \lambda_1(-\lambda_1^q)+(-\lambda_1-\lambda_2)(-\lambda_1-\lambda_3) =
	$$
	$$
	=-\lambda_1^2 + \lambda_1^2 + \lambda_1\lambda_2 + \lambda_2\lambda_3 + \lambda_1\lambda_3 = \lambda_1\lambda_2 + \lambda_2\lambda_3 + \lambda_1\lambda_3 = 0, 
	$$
	as $\lambda_1 \in \Fq$. 
	\end{proof}
	Therefore, if we can find a set $\Lambda \subseteq \Fq \setminus \{0\}$ such that for every $\lambda_1,\lambda_2,\lambda_3 \in \Lambda$ equation \eqref{cond} is never satisfied, then the set $\bigcup_{\lambda \in \Lambda} \cU_\lambda \setminus \{U_2\}$ induces a triangle-free subgraph on $|\Lambda| q^3$ vertices. We will call such a set $\Lambda$ a \textbf{good set}.

	\begin{remark}\label{oss}
	Note that, although Proposition \ref{condition} is useless in the case when $p = 3$, in all the other cases it provides the (weak) lower bound $q^3$ for the number of vertices of a triangle-free induced subgraph of $\DUP$. This can be achieved by considering the points of the unital $\cU_\lambda \setminus \verz{U_2}$, $\lambda \neq 0$.
	\end{remark}
	
	In order to find a good set $\Lambda$ of large size, we first restate the problem.
	
	\begin{lemma}\label{restatement}
		A good set $\Lambda$ is equivalent to a set $\overline{\Lambda} \subseteq \Fq \setminus \{0\}$ such that the sum of any three (not necessarily distinct) of its elements is never zero. In particular $|\Lambda| = |\overline{\Lambda}|$.
	\end{lemma}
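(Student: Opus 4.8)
The plan is to turn the multiplicative condition \eqref{cond} into an additive one by inverting. Since every element of a good set $\Lambda$ is nonzero, I can divide equation \eqref{cond} by $\lambda_1\lambda_2\lambda_3$, which produces the equivalent condition
\[
\frac{1}{\lambda_1} + \frac{1}{\lambda_2} + \frac{1}{\lambda_3} = 0 .
\]
Hence $\Lambda$ is good precisely when no three (not necessarily distinct) of the inverses $\lambda^{-1}$, $\lambda \in \Lambda$, sum to zero.

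Concretely, I would introduce the map $\iota \colon \Fq \setminus \{0\} \to \Fq \setminus \{0\}$, $x \mapsto x^{-1}$, which is an involutory bijection, and set $\overline{\Lambda} := \iota(\Lambda) = \{\lambda^{-1} : \lambda \in \Lambda\}$. The displayed equivalence above then shows that \eqref{cond} fails for every triple from $\Lambda$ if and only if no triple from $\overline{\Lambda}$ sums to zero, and since $\iota$ is a bijection we get $|\overline{\Lambda}| = |\Lambda|$. Conversely, starting from any $\overline{\Lambda} \subseteq \Fq \setminus \{0\}$ in which the sum of any three (not necessarily distinct) elements is nonzero, the set $\Lambda := \iota(\overline{\Lambda})$ is good and of the same size; so $\Lambda \leftrightarrow \overline{\Lambda}$ is the asserted equivalence.

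There is essentially no obstacle here; the only point to watch is that the clause "not necessarily distinct" must match on both sides, which it does because a coincidence $\lambda = \lambda'$ is equivalent to $\lambda^{-1} = (\lambda')^{-1}$ under the bijection $\iota$. I would also remark, looking ahead, that this reformulation is exactly what makes the subsequent construction tractable: finding a large good set is now the purely additive problem of finding a large subset of $\Fq \setminus \{0\}$ containing no solution of $x + y + z = 0$, which one attacks by exhibiting a suitable additive (or coset) structure, and it is this version of the problem that will be used to prove Theorem \ref{Th2}.
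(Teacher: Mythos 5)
Your proposal is correct and follows essentially the same route as the paper: divide the condition $\lambda_1\lambda_2+\lambda_2\lambda_3+\lambda_1\lambda_3\neq 0$ by $\lambda_1\lambda_2\lambda_3$ to obtain $\lambda_1^{-1}+\lambda_2^{-1}+\lambda_3^{-1}\neq 0$ and transport the set through the inversion bijection $x\mapsto x^{-1}$, checking both directions. The only cosmetic difference is that you phrase it via the involutory map $\iota$ rather than writing out the two implications separately, which is entirely equivalent.
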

	\begin{proof}
		Let $x, y, z \in \Lambda$. Then $xy + xz + yx \ne 0$ and dividing by $xyz$ we have that $x^{-1} + y^{-1} + z^{-1} \ne 0$. Let $\overline{\Lambda} := \{x^{-1} \; | \; x \in \Lambda\}$. Then the sum of any three (not necessarily distinct) elements of $\overline{\Lambda}$ is never zero. Vice versa, let $a, b, c \in \overline{\Lambda}$. Then $a + b + c \ne 0$ and dividing by $abc$ we have that $b^{-1} c^{-1} + a^{-1} c^{-1} + a^{-1} b^{-1} \ne 0$. If ${\Lambda} := \{x^{-1} \; | \; x \in \overline{\Lambda}\}$, then for any three (not necessarily distinct) elements of $\Lambda$, $\eqref{cond}$ is never satisfied.
	\end{proof}	
	
	We split into two cases, depending on the parity of $q$.
	
	\begin{lemma} \label{goodseteven}
	If $q$ is even, then there exists a good set $\Lambda \subseteq \Fq \setminus \{0\}$ of size $q/2$.	
	\end{lemma}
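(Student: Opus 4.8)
The plan is to reduce, via Lemma \ref{restatement}, to the purely additive problem of producing a subset $\overline{\Lambda} \subseteq \Fq \setminus \{0\}$ of size $q/2$ no three of whose elements (repetition allowed) sum to zero, and then to exploit that $q$ is even by passing to a linear functional modulo $2$.

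First I would invoke Lemma \ref{restatement}, so that it suffices to find such a set $\overline{\Lambda}$. Since $\mathrm{char}\,\Fq = 2$, I would fix a nonzero $\mathbb{F}_2$-linear form $T : \Fq \to \mathbb{F}_2$ — for concreteness the absolute trace $T = \mathrm{Tr}_{\Fq/\mathbb{F}_2}$, which is $\mathbb{F}_2$-linear and surjective — and set $\overline{\Lambda} := \{ x \in \Fq : T(x) = 1 \}$. Viewing $\Fq$ as an $h$-dimensional vector space over $\mathbb{F}_2$ (where $q = 2^h$), the kernel of $T$ is a hyperplane, so $\overline{\Lambda}$, being a coset of that kernel, has exactly $q/2$ elements; and $T(0) = 0$ forces $0 \notin \overline{\Lambda}$.

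Next I would verify the additive condition by a one-line computation: for any $x, y, z \in \overline{\Lambda}$ (not necessarily distinct) one has $T(x + y + z) = T(x) + T(y) + T(z) = 1 + 1 + 1 = 1 \neq 0$ in $\mathbb{F}_2$, hence $x + y + z \neq 0$ in $\Fq$. Thus $\overline{\Lambda}$ satisfies the hypothesis of Lemma \ref{restatement}, and consequently $\Lambda := \{ x^{-1} : x \in \overline{\Lambda} \}$ is a good set with $|\Lambda| = |\overline{\Lambda}| = q/2$, which is the assertion.

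There is essentially no obstacle here beyond identifying the right object: a non-zero level set of an $\mathbb{F}_2$-linear form (equivalently, an affine hyperplane missing the origin) automatically avoids all three-term zero sums, simply because the sum of three odd residues modulo $2$ is odd. The only point worth flagging is that this argument is genuinely particular to even $q$: in odd characteristic $1 + 1 + 1$ need not vanish, and, more fundamentally, no subset of $\Fq$ of density near $1/2$ can avoid all three-term zero sums, so the odd case must be treated by a different construction.
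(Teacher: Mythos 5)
Your proof is correct and is essentially the paper's own argument: the paper takes an $\mathbb{F}_2$-hyperplane $H$ with $1 \notin H$ and uses $\overline{\Lambda} = H + 1$, which is exactly your level set $T^{-1}(1)$ for the linear form $T$ with kernel $H$, and the verification that three elements of the nontrivial coset cannot sum to zero is the same characteristic-two computation. No further comment is needed.
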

	\begin{proof}
	First we show the existence of an additive subgroup $H$ of size $q/2$ such that $1 \notin H$. Consider $\Fq$ as a vector space $V$ over $\mathbb{F}_2$, then the additive subgroups of $\Fq$ are in one-to-one correspondence with subspaces of this vector space $V$. In particular, a subgroup of size $q/2$ corresponds to a hyperplane of $V$. It is immediate that any of the $q/2$ hyperplanes not through the vector corresponding to $1$, gives rise to a subgroup $H$ satisfying all conditions. 
	
	Then we define the set $\overline{\Lambda}$ by
	\[\overline{\Lambda} = \{h+1 \,\, | \,\, h \in H\}.\]
	It is clear that sum of any three (not necessarily distinct) elements is never zero as
	\[(a+1)+(b+1)+(c+1)=0\]
	is equivalent with
	\[a+b+c=1,\]
	in contradiction with the choice of $H$. Therefore, we find by Lemma \ref{restatement} that $\Lambda = \{x^{-1} \; | \; x \in \overline{\Lambda}\}$ is a good set of size $q/2$.	
	\end{proof}
	
	This shows that for $q$ even, we have a triangle-free induced subgraph of $\DUP$ of size $q^4/2$, which proves Theorem \ref{Th2}.
	
	For $q$ odd, the situation is very different.

	\begin{remark}\label{remark}
	If $q$ is odd, let $P$ be a point of $\PG$ not in $\cU$ and let $T_P$ be the set of non-absolute points distinct from $P$ lying on the $q+1$ lines containing $P$ and tangent to $\cU$. Then $|T_P| = (q^2-1)(q+1)$. We claim that $T_P$ contains no triangle. Otherwise, if $Q_1, Q_2, Q_3$ were a triangle contained in $\cU$, we would obtain a configuration consisting of seven points: $P, Q_1, Q_2, Q_3, PQ_1 \cap \cU, PQ_2 \cap \cU, PQ_3 \cap \cU$ and seven lines $P^\perp, Q_1^\perp, Q_2^\perp, Q_3^\perp, PQ_1, PQ_2, PQ_3$ such that through each point there pass three lines and each line contains three points, i.e., a Fano plane ${\rm PG}(2,2)$. On the other hand, if $q$ is odd, ${\rm PG}(2,2)$ cannot be embedded in $\PG$. 
	Note that a larger set containing $T_P$ and not containing triangles can be obtained by adding to $T_P$ the $q^2-q$ non-absolute points on the line $P^\perp$. This gives the (weak) lower bound $q^3+2q^2-2q-1$ for the number of vertices of a triangle-free induced subgraph of $\DUP$, in the case when $q$ is odd.        
	\end{remark}

	A better lower bound than that of Remark \ref{remark} can be obtained in the case when $q$ is odd and $p \ne 3$. It relies on the following results, whose proofs were communicated to us by Bence Csajb\'ok \cite{BC} and by Anurag Bishnoi and Aditya Potukuchi \cite{BP} independently for the prime case.

	\begin{theorem}\label{goodsetodd}
	Let $q = p^h$ be odd, $3 \ne p = 3k \pm 1$. Then there exists a good set $\Lambda \subseteq \Fq \setminus \{0\}$ of size $kq/p$.
	\end{theorem}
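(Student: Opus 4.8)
The plan is to reduce the statement, via Lemma~\ref{restatement}, to producing a set $\overline{\Lambda} \subseteq \Fq \setminus \verz{0}$ of size $kq/p$ in which the sum of any three (not necessarily distinct) elements is never zero, and then to obtain such a set by pulling back a corresponding set from the prime field $\F_p$ along an $\F_p$-linear map. So the real work happens in $\F_p$, which is exactly the prime case attributed to Csajb\'ok and to Bishnoi--Potukuchi.

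For the prime case I would look for a ``middle interval''. Identify $\F_p$ with $\verz{0,1,\dots,p-1} \subset \mathbb{Z}$ and set
\[
T = \begin{cases}
\verz{k+1,k+2,\dots,2k} & \text{if } p = 3k+1, \\
\verz{k,k+1,\dots,2k-1} & \text{if } p = 3k-1.
\end{cases}
\]
In both cases $|T| = k$ and $0 \notin T$, and for any $a,b,c \in T$ the integer $a+b+c$ lies in the window $[3\min T,\,3\max T]$, which one checks equals $[p+2,\,2p-2]$ in the first case and $[p+1,\,2p-1]$ in the second. Either way $a+b+c$ is an integer strictly between $p$ and $2p$, hence not divisible by $p$, so $a+b+c \neq 0$ in $\F_p$; in particular the same conclusion holds when the three arguments are allowed to repeat.

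Next I would lift: fix an $\F_p$-linear surjection $L\colon \Fq \to \F_p$ (for instance the trace map) and put $\overline{\Lambda} = L^{-1}(T)$. Each fibre of $L$ has size $q/p$, so $|\overline{\Lambda}| = kq/p$, and $0 \notin \overline{\Lambda}$ since $L(0) = 0 \notin T$. If $a,b,c \in \overline{\Lambda}$ had $a+b+c = 0$, applying $L$ would give three elements of $T$ summing to $0$ in $\F_p$, contradicting the previous step. Thus $\overline{\Lambda}$ has the required property, and by Lemma~\ref{restatement} the set $\Lambda = \verz{x^{-1} \mid x \in \overline{\Lambda}}$ is a good set of size $kq/p$; feeding this into the paragraph following Proposition~\ref{condition} then yields a triangle-free induced subgraph of $\DUP$ on $kq^4/p$ vertices.

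I do not expect a genuine obstacle: once the interval is chosen correctly the verification is elementary, and the lifting step is purely formal, working verbatim for every $h \geq 1$ and so promoting the prime case to the prime-power case. The one delicate point is the choice of $T$ in the prime case — one must split according to $p \equiv \pm 1 \pmod 3$ so that the triple-sum window $[3\min T,\,3\max T]$ sits strictly inside the gap $(p,2p)$ between consecutive nonzero multiples of $p$. This is precisely where the hypothesis $p \neq 3$ is used: for $p = 3$ no nonzero element can survive, since $3a = 0$, in agreement with Remark~\ref{oss}.
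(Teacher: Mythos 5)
Your proof is correct and follows essentially the same route as the paper: reduce via Lemma~\ref{restatement} to a sum-free-type condition, solve the prime case with an explicit interval in $\mathbb{F}_p$ (the paper uses $\{1,\dots,k\}$ resp.\ $\{k,\dots,2k-1\}$, you use a middle interval, both of which work), and lift to $\Fq$ using the $\mathbb{F}_p$-linear structure. Your lifting via the preimage of an $\mathbb{F}_p$-linear surjection is exactly the paper's construction $A+\bar{\Lambda}'$ with $A$ a hyperplane meeting $\mathbb{F}_p$ trivially, just phrased as a quotient map, and it handles $h=1$ and $h>1$ uniformly.
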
 
	\begin{proof}
	By Lemma \ref{restatement} it is sufficient to show the existence of a set $\overline{\Lambda} \subseteq \Fq \setminus \{0\}$ such that the sum of any three (not necessarily distinct) of its elements is never zero. Assume first that $h = 1$. If $p = 3k + 1$, then the sum of any three (not necessarily distinct) elements of $\overline{\Lambda} = \{1, 2, 3, \dots, k\}$ is non-zero. If $p = 3k - 1$, then the sum of any three (not necessarily distinct) elements of $\overline{\Lambda} = \{k, k + 1, \dots, 2k - 1\}$ is non-zero.

	Assume now that $h > 1$. Consider again $\Fq$ as a vector space $V$ over $\mathbb{F}_p$, then the additive subgroups of $\Fq$ are in one-to-one correspondence with subspaces of this vector space $V$. Let $A$ be any hyperplane of $V$ which does not contain $1$. Hence $A \cap \mathbb{F}_p = \{0\}$. Also, let $\bar{\Lambda}'$ be a subset of $\mathbb{F}_p$ of size $k$ such that the sum of any three (not necessarily distinct) of its elements is never zero. Put $\overline{\Lambda} = A + \bar{\Lambda}' := \{a + \lambda \; | \; a \in A, \lambda \in \bar{\Lambda}\}$. We claim that $\overline{\Lambda}$ has the required property. Indeed, if we had $(a_1 + \lambda_1) + (a_2 + \lambda_2) + (a_3 + \lambda_3) = 0$, for some $a_i \in A$ and $\lambda_i \in \bar{\Lambda}'$, $1 \le i \le 3$, then $-(a_1 + a_2 + a_3) = \lambda_1 + \lambda_2 + \lambda_3 \in \mathbb{F}_p$. However, $A \cap \mathbb{F}_p = \{0\}$, and thus $\lambda_1 + \lambda_2 + \lambda_3 = 0$, contradicting the choice of $\bar{\Lambda}'$.
	\end{proof}
	
	This good set for $q$ odd provides a triangle-free induced subgraph of size $q^4/3+o(q^4)$, which does not match the upper bound asymptotically.

	\subsection{Properties of the graph}
	
	Assume that $q$ is even and let $\Sigma$ be a triangle-free induced subgraph of $\DUP$ on $q^4/2$ vertices constructed in subsection \ref{ex}. Here, we investigate further properties of the graph $\Sigma$. To start off, we prove that $\Sigma$ is regular.
	
	\begin{prop}
		The graph $\Sigma$ is $q(q-1)/2$-regular.
	\end{prop}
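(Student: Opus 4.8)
The plan is to count edges of $\Sigma$ locally by exploiting the regular action of the group $K$ from Lemma \ref{group} together with the pencil structure. Recall $\Sigma = \bigcup_{\lambda \in \Lambda} \cU_\lambda \setminus \{U_2\}$ for a good set $\Lambda$ of size $q/2$, and that each $\cU_\lambda \setminus \{U_2\}$ has $q^3$ points; since $K$ stabilizes each $\cU_\lambda$ and acts regularly on $\cU_\lambda \setminus \{U_2\}$, the whole set $\Sigma$ is $K$-invariant and $K$ acts on it. Because $K$ preserves adjacency (it lies in $G \le \PGL(3,q^2)$) and acts transitively on each unital-minus-point, to prove regularity it suffices to show that all vertices lying on the \emph{same} $\cU_\lambda \setminus \{U_2\}$ have the same degree in $\Sigma$ — which is immediate from the $K$-action — and then that the common degree does not depend on $\lambda$. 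For the latter I would produce, for each pair $\lambda, \mu \in \Lambda$, an element of $G$ (a homology or an element of the stabilizer of the pencil $\cP$) mapping $\cU_\lambda$ to $\cU_\mu$ while preserving $\Sigma$; alternatively, and more cleanly, just compute the degree of one explicit vertex directly.

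The direct computation goes as follows. Fix $\lambda \in \Lambda$ and, using the $K$-transitivity, take the representative vertex $P = (1, x_0, 0) \in \cU_\lambda$ with $\lambda + x_0 + x_0^q = 0$. Its neighbours in $\DUP$ are the $q^2+1$ points on $P^\perp : x_0^q X + Y = 0$. I need to count how many of these lie in $\Sigma$, i.e. how many are non-$U_2$ points of some $\cU_\mu$ with $\mu \in \Lambda$. A point of $P^\perp$ other than $U_2 = (0,1,0)$ can be written $(1, -x_0^q, t)$ for $t \in \Fq$; substituting into $\cU_\mu$'s equation gives $\mu - x_0^q - x_0 + t^{q+1} = 0$, i.e. $\mu = \lambda - t^{q+1}$ using \eqref{eq1}. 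So the neighbour $(1,-x_0^q,t)$ lies in $\Sigma$ iff $\lambda - t^{q+1} \in \Lambda$ (and the point is not $U_2$, which it never is). As $t$ ranges over $\Fq$, $t^{q+1}$ is a norm, ranging over $\Fq$ — it hits $0$ once and each nonzero value of $\Fq$ exactly $q+1$ times (since the norm map $\Fq^\times \to \F_{q'}^\times$... here $q$ is already the field size so I should be careful: the point coordinates live in $\F_{q^2}$, and $t^{q+1}$ for $t \in \F_{q^2}$ is the norm to $\Fq$, taking the value $0$ once and each value in $\Fqstar$ exactly $q+1$ times). Thus the number of such neighbours equals $(q+1) \cdot |\{\mu \in \Lambda : \lambda - \mu \in \Fqstar \text{ is a value of the norm and } \mu \ne \lambda\}|$ plus possibly one more neighbour from $t = 0$ (giving $\mu = \lambda$). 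Since \emph{every} nonzero element of $\Fq$ is a norm value, the count is $(q+1)(|\Lambda| - 1) + 1 = (q+1)(q/2 - 1) + 1$ ... let me recompute: neighbours with $t \ne 0$ contributing when $\lambda - t^{q+1} \in \Lambda$; as $\lambda - t^{q+1}$ runs over $\Lambda \setminus \{\lambda\}$ as $t$ ranges over suitable values, each such $\mu \in \Lambda\setminus\{\lambda\}$ arises from exactly $q+1$ values of $t$, and $t=0$ gives $\mu = \lambda \in \Lambda$, one more neighbour. Total: $(q+1)(q/2 - 1) + 1 = (q^2-q)/2 - q - 1 + 1 + \dots$; the bookkeeping should land on $q(q-1)/2$, and I would present it carefully rather than here.

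The key steps in order: (1) observe $\Sigma$ is $K$-invariant and $K$ acts transitively on each $\cU_\lambda \setminus \{U_2\}$, so the degree function is constant on each $\cU_\lambda \setminus \{U_2\}$; (2) fix a convenient representative $P = (1, x_0, 0) \in \cU_\lambda$ and parametrize the non-$U_2$ points of $P^\perp$ as $(1, -x_0^q, t)$, $t \in \Fq$ (here $\Fq = \F_{q^2}$ is the coordinatizing field); (3) substitute into $\cU_\mu$ and use \eqref{eq1} to get the clean condition $\mu = \lambda - t^{q+1}$; (4) use that $t \mapsto t^{q+1}$ is the norm map $\F_{q^2}^\times \to \Fqstar$, surjective with all fibres of size $q+1$, plus the single value $t = 0$; (5) conclude the degree is $(q+1)(|\Lambda|-1) + 1 = q(q-1)/2$, independent of $\lambda$, hence $\Sigma$ is $q(q-1)/2$-regular. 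The main obstacle is purely a matter of careful bookkeeping: keeping straight the two roles of ``$q$'' (the order of the plane is $q^2$, but the coordinate field in the paper's convention is written $\Fq$ and has size $q^2$ in the usual sense — actually in this paper $\Fq$ \emph{is} the coordinate field and the plane has order $q^2$, so $t^{q+1}$ is the norm from $\Fq$ to its index-$2$ subfield of size... no: $\Fq$ here denotes $\F_{q}$ and the plane $\PG = \mathrm{PG}(2,q^2)$ uses $\F_{q^2}$), so I must track which field each variable inhabits, verify the norm fibre sizes accordingly, and confirm that no neighbour is accidentally absolute or equal to $U_2$. Everything else is a one-line substitution using \eqref{eq1}.
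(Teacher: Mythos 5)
Your proposal is correct and takes essentially the same route as the paper: both reduce via Lemma \ref{group} to a representative $(1,x,0)\in\cU_\lambda$ and count the points of its polar line lying on the chosen pencil members, arriving at $1+(q+1)(q/2-1)=q(q-1)/2$, the only cosmetic difference being that you count through the fibres of the norm $t\mapsto t^{q+1}$ while the paper phrases the same substitution as tangency/secancy of $v^\perp$ to the curves $\cU_\mu$. Two harmless slips: the point of $P^\perp$ not covered by your parametrization is $(0,0,1)\in\ell$ (not $U_2$, which does not lie on $P^\perp$), and the relation should read $\mu=-\lambda-t^{q+1}$, which equals $\lambda+t^{q+1}$ only because $q$ is even --- exactly the case at hand.
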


	\begin{proof}
	Let  $P$ be a point of $\PG$, $P \notin \ell$, where $\ell$ is the tangent to $\cU$ at $U_2$. As we have partitioned all points of $\PG$ into the union of the $q$ sets $\cU_\lambda \setminus \{U_2\}$ and the line $\ell$, it is easy to see that the line $P^\perp$ contains a point of $\ell$, is secant to $q-1$ Hermitian curves of $\cP$ and is tangent to exactly one Hermitian curve of $\cP$. Consider a vertex $v \in \cU_\lambda \setminus \{U_2\}$, $\lambda \in \Lambda$ and let $\cU_{\mu}$ be the unique Hermitian curve of $\cP$ such that $|v^\perp \cap \cU_{\mu}| = 1$. Taking into account Lemma \ref{group}, we can assume that the point $v$ has coordinates $(1,x,0)$, where $x+x^q = \lambda$. Then $v^\perp$ has dual coordinates $[x^q,1,0]$ and we have to find $\mu$ such that $v^\perp$ is tangent to $\cU_\mu$. This means finding $\mu$ such that
		\begin{align*}
		0= \mu X^{q+1} + (x+x^q)X^{q+1}+Z^{q+1}= \mu X^{q+1} + \lambda X^{q+1}+Z^{q+1}
		\end{align*}
	has only one solution. Since $q$ is even, it follows immediately that $\lambda = \mu$. Hence, every vertex $v \in \cU_\lambda$ has exactly one neighbour in $\cU_\lambda$. Moreover, it has $q+1$ neighbours in $\Sigma$ on the $q/2-1$ other Hermitian curves $\cU_{\lambda}, \lambda \in \Lambda \setminus \{\mu\}$, which implies that the degree in $\Sigma$ of every vertex of $\Sigma$ is $q(q-1)/2$.	
	\end{proof}
	
	In fact, with a similar proof, one can show the exact intersection numbers for any line with $S$.
	
	\begin{cor}\label{intersection}
		Let $x$ be a point of $\PG$, then 
		$$
		|x^\perp \cap \Sigma| = 
		\begin{cases}
		\frac{q^2-q}{2} & \mbox{if} \; x \in \Sigma , \\ 
		\frac{q^2+q}{2} & \mbox{if} \; x \in \PG \setminus (S \cup \ell) , \\
		\frac{q^2}{2} & \mbox{if } \; x \in \ell \setminus \{U_2 \} , \\
		0 & \mbox{if } \; x = U_2 .
		\end{cases} 
		$$
	\end{cor}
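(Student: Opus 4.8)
The plan is to follow the pattern of the previous proposition, splitting into four cases according to the position of $x$ relative to the partition of $\PG$ into the sets $\cU_\lambda\setminus\{U_2\}$, $\lambda\in\Fq$, together with the line $\ell$: namely $x$ lying off $\ell$ on some $\cU_\mu$ with $\mu\in\Lambda$ (i.e.\ $x\in\Sigma$); off $\ell$ on some $\cU_\mu$ with $\mu\in\Fq\setminus\Lambda$ (which comprises the non-absolute points outside $\Sigma$ together with \emph{all} absolute points different from $U_2$, the latter being exactly those with $\mu=0$); $x\in\ell\setminus\{U_2\}$; and $x=U_2$. In each case I will describe how the line $x^\perp$ meets the $q$ Hermitian curves $\cU_\lambda$, and then, since the sets $\cU_\lambda\setminus\{U_2\}$ are pairwise disjoint, use $|x^\perp\cap\Sigma|=\sum_{\lambda\in\Lambda}|x^\perp\cap(\cU_\lambda\setminus\{U_2\})|$.

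The two generic cases rely on information already contained in the proof of the preceding proposition (combined with Lemma \ref{group}): if $x\notin\ell$ lies on $\cU_\mu$, then $x^\perp$ does not pass through $U_2$, is tangent to $\cU_\mu$, and — by counting the $q^2+1$ points of $x^\perp$ against the partition, using that a line meets a non-degenerate Hermitian curve in $1$ or $q+1$ points — is secant to each of the remaining $q-1$ curves $\cU_\nu$, meeting each of them in $q+1$ points. When $\mu\notin\Lambda$ this yields $|x^\perp\cap\Sigma|=(q+1)|\Lambda|=(q^2+q)/2$, and when $\mu\in\Lambda$ it yields $1+(q+1)(|\Lambda|-1)=(q^2-q)/2$, which also recovers the regularity statement. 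For $x=U_2$ the polar line is $\ell$ itself, and since $\cU_\lambda\cap\ell=\{U_2\}$ for every $\lambda$, we get $\Sigma\cap\ell=\emptyset$ and hence $|x^\perp\cap\Sigma|=0$.

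The only case requiring a fresh (but short) computation is $x\in\ell\setminus\{U_2\}$. Here $x^\perp$ is one of the $q^2$ lines through $U_2$ distinct from $\ell$; taking $x=(0,y_0,1)$, so that $x^\perp\colon y_0^qX+Z=0$, and substituting the parametrisation of $x^\perp$ into the equation of $\cU_\nu$ — exactly as in the previous proof, and using $y_0^{q(q+1)}=y_0^{q+1}\in\Fq$ — the condition collapses to a trace equation of the shape $u+u^q=c$ with $c\in\Fq$. Since $u\mapsto u+u^q$ is a surjective $\Fq$-linear map $\F_{q^2}\to\Fq$ with all fibres of size $q$, every $\cU_\nu$ meets $x^\perp$ in exactly $q$ points besides $U_2$, and summing over $\lambda\in\Lambda$ gives $|x^\perp\cap\Sigma|=q|\Lambda|=q^2/2$. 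I expect this to be the main point to watch: the polars of points of $\ell$ are secant to \emph{every} curve of the pencil rather than tangent to one of them, so they do not fit the generic dichotomy, and one must also verify that the absolute points other than $U_2$ genuinely belong to the case $\mu=0\in\Fq\setminus\Lambda$ and therefore contribute $(q^2+q)/2$ rather than being confused with $U_2$. As a final sanity check, the four values satisfy $\sum_x|x^\perp\cap\Sigma|=|\Sigma|(q^2+1)$, which is forced by double counting incident point–polar pairs.
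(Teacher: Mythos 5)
Your proposal is correct and follows essentially the same route the paper intends: the corollary is stated there as following ``with a similar proof'' from the regularity proposition, i.e.\ by running the tangent/secant analysis of $x^\perp$ against the pencil $\{\cU_\lambda\}$ for every position of $x$, which is exactly what you do. Your only genuinely new ingredient, the trace computation showing that the polar of a point of $\ell\setminus\{U_2\}$ meets each $\cU_\nu$ in $q$ points besides $U_2$, is correct (and could equivalently be obtained by counting the $q^2$ points of such a line off $U_2$ against the $q$ curves), and your extension of the tangency computation to $\mu\in\Fq\setminus\Lambda$, including $\mu=0$, is the right way to cover the second case.
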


	Remark that $q(q-1)/2$-regularity is the best we can achieve. Indeed, as we have already seen, every non-absolute point $v$ is adjacent to $q(q-1)$ other non-absolute points and these neighbours come in pairs to form $q(q-1)/2$ triangles with common vertex $v$, so $v$ can be adjacent to at most one vertex in each of these triangles. The fact that $\Sigma$ is $q(q-1)/2$-regular implies that $v$ is adjacent to \textit{exactly} one vertex in each of the triangles. In other words, if a triangle of $\DUP$ contains a vertex of $\Sigma$, it contains another vertex of $\Sigma$. Thus we have shown the following result.
	
	\begin{cor} 
		Every triangle of $\DUP$ has either $0$ or $2$ vertices in common with $\Sigma$.
	\end{cor}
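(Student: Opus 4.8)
The plan is to combine the regularity of $\Sigma$ established in the previous proposition with the triangle-freeness of $\Sigma$, essentially formalizing the paragraph preceding the statement. First I would observe that the case $|T \cap \Sigma| = 3$ is impossible, since $\Sigma$ induces a triangle-free subgraph; so the whole content is to rule out $|T \cap \Sigma| = 1$.

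Next I would fix a triangle $T = \{v, w_1, w_2\}$ of $\DUP$ with $v \in \Sigma$ and show $|T \cap \Sigma| = 2$. By the discussion in Section \ref{section1}, all three vertices of a triangle are non-absolute, the $q(q-1)$ non-absolute neighbours of $v$ split into $q(q-1)/2$ adjacent pairs, each pair together with $v$ forming a triangle, and conversely every triangle through $v$ arises this way; in particular $T$ is one of these triangles and $\{w_1,w_2\}$ is one of these pairs. Since $\Sigma$ consists only of non-absolute vertices, every neighbour of $v$ lying in $\Sigma$ belongs to exactly one of these $q(q-1)/2$ pairs, and triangle-freeness of $\Sigma$ forbids both members of a single pair from lying in $\Sigma$. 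Hence $d_\Sigma(v) \le q(q-1)/2$, with equality forcing each pair to contribute exactly one vertex of $\Sigma$. But $\Sigma$ is $q(q-1)/2$-regular, so equality holds, and applying this to the pair $\{w_1, w_2\}$ gives exactly one of $w_1, w_2$ in $\Sigma$, i.e.\ $|T \cap \Sigma| = 2$.

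Finally I would note that an arbitrary triangle $T$ of $\DUP$ either misses $\Sigma$ entirely, giving $|T \cap \Sigma| = 0$, or contains some vertex $v \in \Sigma$, in which case the previous step yields $|T \cap \Sigma| = 2$; this gives the claim. I do not expect a genuine obstacle: the only point requiring care is the counting of $d_\Sigma(v)$ across the pairs of non-absolute neighbours of $v$, which relies precisely on the fact that these pairs partition $N(v) \setminus \cU$ together with the fact that $\Sigma$ avoids $\cU$.
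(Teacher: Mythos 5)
Your proposal is correct and follows essentially the same argument as the paper: the pairing of the $q(q-1)$ non-absolute neighbours of a vertex $v\in\Sigma$ into $q(q-1)/2$ triangles, combined with triangle-freeness (at most one member of each pair in $\Sigma$) and the $q(q-1)/2$-regularity of $\Sigma$ (forcing exactly one member of each pair in $\Sigma$), yields the claim. The only difference is that you spell out explicitly the exclusion of the case $|T\cap\Sigma|=3$, which the paper leaves implicit.
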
	
	
	This property allows us to show that the subgraph $\Sigma$ is maximal with the triangle-free property, i.e., we cannot add any vertex not in $\Sigma$ without creating a triangle.
	
	\begin{prop}
		The graph $\Sigma$ is maximal with respect to the triangle-free property.
	\end{prop}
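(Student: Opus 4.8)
The plan is to take an arbitrary non-absolute vertex $v$ of $\DUP$ with $v \notin \Sigma$ and to exhibit a triangle of $\DUP$ all of whose vertices lie in $\Sigma \cup \{v\}$. This is precisely what it means for $\Sigma \cup \{v\}$ to fail to be triangle-free; since an absolute vertex lies on no triangle at all (Section \ref{section1}), proving this for every non-absolute $v \notin \Sigma$ shows that $\Sigma$ is maximal among triangle-free induced subgraphs on non-absolute vertices. So the one point to be careful about at the outset is the scope of ``maximal''.

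First I would recall the local structure from Section \ref{section1}: the non-absolute neighbours of a non-absolute vertex $v$ are exactly the non-absolute points of its polar line $v^\perp$, there are $q^2-q$ of them, and they split into $(q^2-q)/2$ pairs, each pair consisting of the two vertices other than $v$ of one of the triangles of $\DUP$ through $v$. Since $\Sigma$ consists entirely of non-absolute points, the set $v^\perp \cap \Sigma$ is contained in this collection of $q^2-q$ paired neighbours of $v$.

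Next I would invoke Corollary \ref{intersection}. Since $U_2$ is absolute we have $v \neq U_2$, so either $v \in \PG \setminus (\Sigma \cup \ell)$, in which case $|v^\perp \cap \Sigma| = (q^2+q)/2$, or $v \in \ell \setminus \{U_2\}$, in which case $|v^\perp \cap \Sigma| = q^2/2$; in both cases $|v^\perp \cap \Sigma| \geq q^2/2 > (q^2-q)/2$. Now the pigeonhole step finishes the argument: if none of the $(q^2-q)/2$ pairs of non-absolute neighbours of $v$ were entirely contained in $\Sigma$, then each pair would contribute at most one point to $v^\perp \cap \Sigma$, forcing $|v^\perp \cap \Sigma| \leq (q^2-q)/2$, a contradiction. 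Hence some pair $\{s_1,s_2\}$ lies in $\Sigma$, and then $\{v,s_1,s_2\}$ is a triangle of $\DUP$ contained in $\Sigma \cup \{v\}$.

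There is no genuine obstacle remaining: all the work has been done in establishing Corollary \ref{intersection}, and the argument above is a one-line counting deduction from it. If anything, the delicate point is purely expository, namely making explicit (as above) that maximality is meant within the family of triangle-free induced subgraphs on non-absolute vertices, since adding absolute vertices never creates a triangle and so ``maximality'' in the literal sense over all vertices would be false.
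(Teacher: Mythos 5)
Your proof is correct, but it finishes differently from the paper. The paper's own argument first observes that any new vertex $v$ has at least one neighbour in $\Sigma$ (because $v^\perp$ meets every Hermitian curve $\cU_\lambda$, $\lambda \in \Lambda$, in at least one point), and then invokes the corollary that every triangle of $\DUP$ meets $\Sigma$ in $0$ or $2$ vertices: the triangle through $v$ and that neighbour must therefore have its third vertex in $\Sigma$ as well. You instead use the exact intersection numbers of Corollary \ref{intersection} and a pigeonhole count: since $|v^\perp\cap\Sigma| \geq q^2/2 > (q^2-q)/2$ and the non-absolute points of $v^\perp$ split into $(q^2-q)/2$ triangle-pairs through $v$, some pair lies entirely in $\Sigma$. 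Both routes ultimately rest on the regularity of $\Sigma$, but they trade different ingredients: the paper needs only the weak fact that $v$ has \emph{some} neighbour in $\Sigma$ together with the parity corollary, whereas your argument bypasses the parity corollary entirely at the cost of requiring the precise (or at least sufficiently large) intersection numbers; your count even shows that $v$ would lie in at least $q/2$ triangles with both other vertices in $\Sigma$, which is slightly more information. Your explicit remark that maximality is meant within the family of induced subgraphs on non-absolute vertices is also the correct reading of the statement, and matches the paper's implicit convention, since absolute vertices lie on no triangles.
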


	\begin{proof}
		Suppose we could add another vertex $v$. This vertex $v$ has at least one neighbour in $\Sigma$ as $v^\perp$ intersects any $\cU_{\lambda}$ in at least one point. Therefore, consider a triangle $T$ containing $v$ and a vertex of $v^\perp \cap \Sigma$. From the previous Corollary, we know that the triangle $T$ actually has its third vertex in $\Sigma$ and hence we cannot add $v$ to $\Sigma$ without creating a triangle.
	\end{proof}
	
The next result shows that $\Sigma$ can be chosen in such a way that it has girth $5$. Note that the following construction asymptotically matches the best known lower bound on the maximum number of edges in a $n$-vertex graph with girth at least five \cite{Bong}.
 
	\begin{prop}
		For $q$ an even prime power, there exists a $q(q-1)/2$-regular graph on $q^4/2$ vertices of girth $5$.
	\end{prop}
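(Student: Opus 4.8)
The plan is to show that the graph $\Sigma$ constructed in the previous propositions --- which is $q(q-1)/2$-regular on $q^4/2$ vertices for $q$ an even prime power (we may assume $q\ge 4$, since for $q=2$ the statement is vacuous) --- has girth $5$. Since $\Sigma$ is already triangle-free by construction, being the subgraph of $\DUP$ induced by $\bigcup_{\lambda\in\Lambda}\cU_\lambda\setminus\{U_2\}$ for a good set $\Lambda$, it remains to prove that $\Sigma$ contains no $4$-cycle and that it does contain a $5$-cycle.

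First I would observe that $\DUP$, and indeed the polarity graph of \emph{any} projective plane, contains no $4$-cycle: if $P_1P_2P_3P_4P_1$ were a $4$-cycle, then both $P_1$ and $P_3$ would lie in $P_2^{\perp}\cap P_4^{\perp}$, but $P_2\ne P_4$ forces $P_2^{\perp}\ne P_4^{\perp}$, and two distinct lines of a projective plane meet in a unique point, so $P_1=P_3$, a contradiction. As $\Sigma$ is an induced subgraph of $\DUP$, it is $C_4$-free as well; together with triangle-freeness this gives $\mathrm{girth}(\Sigma)\ge 5$, and since $\Sigma$ is $q(q-1)/2$-regular with $q(q-1)/2\ge 2$ it is not a forest, hence has finite girth.

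It then remains to exhibit a $5$-cycle. One route is direct: choose $\lambda_1\ne\lambda_2$ in the good set $\Lambda$ (possible since $|\Lambda|=q/2\ge 2$) and produce five points $P_1,\dots,P_5$ lying on $\cU_{\lambda_1}\cup\cU_{\lambda_2}$, distinct from $U_2$, with $P_i\in P_{i+1}^{\perp}$ cyclically; using the coordinate form $(x,y,z)^{\perp}=[y^q,x^q,z^q]$ of the polarity and the regular action of the group $K$ of Lemma \ref{group} to normalise one point, this reduces to solving a small system over $\Fq$ in characteristic $2$. A second route is a trace computation: in a triangle-free, $C_4$-free graph every closed walk of length $5$ is a genuine $5$-cycle, so $10\,\#C_5(\Sigma)=\mathrm{tr}(A_\Sigma^5)=d^5+\sum_{i\ge 2}\mu_i^5$, where $d=q(q-1)/2$ and $\mu_2\ge\dots\ge\mu_{q^4/2}$ are the remaining eigenvalues of $\Sigma$. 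Eigenvalue interlacing of $\Sigma$ with $\Gamma$, whose non-principal eigenvalues are $\pm q$, yields $|\mu_i|\le q$ for $i\ge 2$, hence $\#C_5(\Sigma)\ge\tfrac{1}{10}\big(d^5-q(\mathrm{tr}(A_\Sigma^4)-d^4)\big)$ with $\mathrm{tr}(A_\Sigma^4)=\tfrac{q^4}{2}(2d^2-d)$; the right-hand side is positive for all sufficiently large $q$, and the finitely many small values of $q$ can be checked directly. Either way, $\Sigma$ has girth exactly $5$.

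The main obstacle is precisely this last step, namely pinning the girth to $5$ rather than $6$: the inequality $\mathrm{girth}(\Sigma)\ge 5$ is immediate, but ruling out girth $6$ --- equivalently, forcing a short odd cycle --- needs either the somewhat delicate explicit construction of a self-polar pentagon all of whose vertices lie on the selected Hermitian curves $\cU_\lambda$, or the trace estimate above, which is transparent asymptotically but must be complemented by a finite computation for small $q$.
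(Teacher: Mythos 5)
Your reduction is the right one: girth at least $5$ is immediate, since $\Sigma$ is triangle-free by the choice of a good set and any polarity graph is $C_4$-free (two vertices have at most one common neighbour, namely the intersection of their polar lines), so the whole content of the proposition is producing a single pentagon. On this point your two routes fare very differently. Route 1, as written, has a genuine gap: you assert that a pentagon can be found with all five vertices on just two curves $\cU_{\lambda_1}\cup\cU_{\lambda_2}$ and that this ``reduces to solving a small system'', but the solvability of that system is exactly the hard part and is nowhere established. In fact, in the natural configuration (a matched edge inside $\cU_{\lambda_1}$, a matched edge inside $\cU_{\lambda_2}$, joined by two cross edges) the fifth vertex $P_2^\perp\cap Q_2^\perp$ is forced onto a \emph{third} curve $\cU_\mu$ with $\mu=(\lambda_1^2+\lambda_1\lambda_2+\lambda_2^2)/(\lambda_1+\lambda_2)$, and the paper's proof consists precisely in choosing the hyperplane $H$ (hence $\Lambda$) a posteriori so that this $\mu$ also lies in $\Lambda$ while $a$, $b$ avoid finitely many algebraic degeneracies; this works for $q\ge 32$, with $q\le 16$ checked by Magma. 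A two-curve pentagon is not obviously impossible, but it imposes an additional trace condition on $\lambda_1,\lambda_2$ that would again have to be met by a suitable choice of $H$; your sketch does not engage with any of this, so route 1 alone does not prove the statement.

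Route 2 is a genuinely different and essentially sound argument, and with two completions it would be a full proof. The ingredients you cite are all correct: in a loopless triangle-free graph every closed $5$-walk traverses a $5$-cycle, so the trace of $A_\Sigma^5$ counts pentagons ten times; $C_4$-freeness and $d$-regularity give $\mathrm{tr}(A_\Sigma^4)=\tfrac{q^4}{2}(2d^2-d)$; and interlacing with $\Gamma$ bounds all non-principal eigenvalues of $\Sigma$ by $q$ in absolute value. What is missing is the actual verification of positivity: substituting $d=q(q-1)/2$ and $n=q^4/2$, the inequality $d^5+qd^4>qn(2d^2-d)$ reduces to $q^4-10q^3+8q^2+10q-1>0$, which holds for every even prime power $q\ge 16$ but fails for $q=8$ and $q=4$; so your argument needs a direct (computer) check only for $q\in\{4,8\}$, which is in the same spirit as, and in fact slightly less than, the paper's Magma check for $q\le 16$. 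Once you add this computation and the small cases, your proof is complete, and it even gives something marginally stronger than the paper's: \emph{every} graph $\Sigma$ arising from a good set of size $q/2$ has girth $5$ for $q\ge 16$, whereas the explicit construction only shows that $H$ can be chosen so that $\Sigma$ contains a pentagon. One small correction: for $q=2$ the statement is not vacuous but false as literally stated (a $1$-regular graph has no cycles), so, like the paper, you should simply restrict to $q\ge 4$.
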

	
\begin{proof}
	We can show the result for $q \le 16$ using Magma \cite{magma}, so suppose $q \geq 32$ for the remainder of the proof. \\   
	Taking into account Lemma \ref{goodseteven}, let $\Lambda = \verz{1/(1+a) \; | \; a \in H},$ where $H$ is an additive subgroup of $\Fq$ of order $q/2$ not containing $1$. Let us consider a non-zero element $a \in H$ and let $\lambda_1 = 1/(1+a) \in \Lambda$. Let $b \in H$, with $a \ne b$ such that $b$ is not a solution of none of the following equations: 
	\begin{equation} \label{eq5}
		X^2+(a+1) X+a^3 = 0 , \;\; X^3+a X + a(a+1) = 0 , 
	\end{equation}
	\begin{equation} \label{eq6}
		X^2+(a+1) X+ a^2+a+1 = 0 . 
	\end{equation}
	Since the union of the solutions of the equations \eqref{eq5} and \eqref{eq6} consists of at most 7 distinct elements of $\Fq$, we can always find such an element $b$ if $q \ge 32$.		
	Let $P_1 := (1, x_1, 0) \in \cU_{\lambda_1}$ and $P_2 := (1, x_1^q, 0) = P_1^\perp \cap \cU_{\lambda_1}$ its unique neighbour on $\cU_{\lambda_1}$. Take another point $Q_1 := (1, x_1^q, z) \in P_1^\perp \cap \cU_{\lambda_2}$, with $\lambda_2 = 1/(1+b) \in \Lambda \setminus \{\lambda_1\}$. Hence $x_1 + x_1^q = \lambda_1$ and $z^{q+1} = \lambda_1 + \lambda_2$. Its unique neighbour on $\cU_{\lambda_2}$ is \hspace{.075em}$Q_2 := (1, x_1 + \lambda_1 + \lambda_2, z) = Q_1^\perp \cap \cU_{\lambda_2}$. Then $R := P_2^\perp \cap Q_2^\perp = (1, x_1, \lambda_2/z^q)$ belongs to $\cU_{\frac{\lambda_1^2 + \lambda_1 \lambda_2 + \lambda_2^2}{\lambda_1 + \lambda_2}}$. Note that, since $b$ is not a solution of \eqref{eq6}, we have that $\lambda_1^2 + \lambda_1 \lambda_2 + \lambda_2^2 \ne 0$. Hence $P_1 P_2 R Q_2 Q_1$ is a cycle of length $5$ in $\Sigma$ if and only if 
	\begin{equation} \label{eq7}
		\frac{\lambda_1^2 + \lambda_1 \lambda_2 + \lambda_2^2}{\lambda_1 + \lambda_2} \in \Lambda.
	\end{equation} 
	On the other hand, a straightforward calculation shows that \eqref{eq7} holds true if and only if 
		$$
		x:= \frac{a^2b + ab^2 + ab +1}{a^2 + b^2 + ab + a + b + 1} \in H .
		$$ 
	Note that $x \ne 1$, otherwise at least one among $a$ and $b$ should be $1$. Analogously to the proof of Lemma \ref{goodseteven}, view $\Fq$ as a vector space $V$ over $\mathbb{F}_2$. Since $b$ is not a solution of the equations \eqref{eq5}, we have that the vector subspace of $V$ generated by $a, b$ and $x$ does not contain $1$. Therefore, we can find a hyperplane $H$ such that $a,b,x \in H$ and $1 \notin H$. This hyperplane corresponds to an additive subgroup of size $q/2$, which concludes the proof. 
	\end{proof}

	\section{The Figueroa plane}

The finite {\em Figueroa planes} are non-Desarguesian projective planes of order $q^3$ for all prime powers $q > 2$. These planes were constructed algebraically in 1982 by Figueroa \cite{F}, and Hering and Schaeffer \cite{HS}, and synthetically in 1986 by Grundh\"ofer \cite{G}. All Figueroa planes of finite square order possess a unitary polarity and hence admit unitals \cite{DH}. It is known that every polarity of $\cF$ induces a polarity of ${\rm PG}(2,q^3)$ \cite[Theorem 4.2]{Ham}. Vice versa, it can be seen that under certain assumptions, a polarity $\rho$ of the Desarguesian projective plane ${\rm PG}(2,q^3)$  is ``inherited'' and gives rise to a polarity $\rho'$ of the Figueroa plane $\cF$. In this section we show that, in the case of ``inherited'' polarities, a triangle-free induced subgraph of $\cG({\rm PG}(2,q^3), \rho)$ gives rise to a triangle-free induced subgraph of $\cG(\cF, \rho')$.   

	\subsection{Construction of Figueroa planes}

Let $\alpha$ be an order $3$ collineation of the classical projective plane ${\rm PG}(2,q^3)$ of order $q^3$ over the finite field $\F_{q^3}$, where the fixed points of $\alpha$ constitute a subplane isomorphic to ${\rm PG}(2,q)$. The points and lines of ${\rm PG}(2,q^3)$ are partitioned into distinct types, as follows. A point $x$ of ${\rm PG}(2,q^3)$ belongs to $\cO_1$ if $x^\alpha = x$, or to $\cO_2$ if $x, x^\alpha, x^{\alpha^2}$ are distinct and on a line, or to $\cO_3$ if $x, x^\alpha, x^{\alpha^2}$ are distinct and not on a line. Types of lines $\cL_1, \cL_2, \cL_3$ of ${\rm PG}(2,q^3)$ are defined dually. Points of $\cO_1$ and lines of $\cL_1$ thus constitute a subplane $\cD$ isomorphic to ${\rm PG}(2,q)$. If $x$ is a point of $\cO_2$, then it is on the unique line of $\cL_1$ containing $x, x^\alpha, x^{\alpha^2}$. Conversely, if a point is on a unique line of $\cL_1$, then it belongs to $\cO_2$ since a point of $\cO_1$ is on $q+1$ lines of $\cL_1$ and a point of $\cO_3$ is on no line of $\cL_1$. It follows that if (and only if) a point is on no line of $\cL_1$, then it belongs to $\cO_3$.

Let $\mu$ be an involutory bijection between the points of $\cO_3$ and the lines of $\cL_3$ given as follows: if $x \in \cO_3$ and $\ell \in \cL_3$, then $x^{\mu} = x^\alpha x^{\alpha^2}$, and $\ell^{\mu} = \ell^\alpha \cap \ell^{\alpha^2}$. The Figueroa plane $\cF$ is obtained by the introduction of a new incidence between the set of points $\cP$ and the set of lines $\cL$ of ${\rm PG}(2,q^3)$, so that (viewing a line as a point set) the $q^3-q^2-q-2$ points of $\cO_3$ on the line $\ell = x^\alpha x^{\alpha^2} \in \cL_3$ distinct from $x^\alpha$ and $x^{\alpha^2}$ are replaced by other points of $\cO_3$ to form a new line. More precisely, as a set of points, a line of ${\rm PG}(2,q^3)$ belonging to $\cL_1$ or to $\cL_2$ remains unchanged as a line in $\cF$. As for a line $\ell = x^\alpha x^{\alpha^2} \in \cL_3$ in ${\rm PG}(2,q^3)$, where $x \in \cO_3$, let $y_i$, $1 \le i \le q^3 - q^2 - q - 2$, be the remaining points of $\cO_3$ on $\ell$. Consider the pencil of lines of $\cL_3$ on the point $x \in \cO_3$. Other than $x x^{\alpha}$ and $x x^{\alpha^2}$, the remaining lines of $\cL_3$ in the pencil are given by $z_i z_i^{\alpha}$,$1 \le i \le q^3 - q^2 - q - 2$, where each $z_i$ is a point of $\cO_3$. Let $\ell_{\cF}$ be the set of points obtained from $\ell$ by replacing each $y_i$ with $z^{\alpha^2}$. Then, $\ell_{\cF}$ is the Figueroa line corresponding to the line $\ell$ of $\cL_3$, see also \cite{Br}. Note that 
\[P \in \ell_{\cF} \cap \cO_3 \mbox{ if and only if }\ell^\mu \in P^\mu.\]

We observe the following property.
\begin{lemma}\label{lemma1}
	Let $\ell_1, \ell_2 \in \cL_3$ such that $\ell_1 \cap \ell_2 \in \cO_2$. Then the line $\ell_1^\mu \ell_2^\mu$ belongs to $\cL_2$.
\end{lemma}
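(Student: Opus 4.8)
The plan is to work entirely inside the Desarguesian plane ${\rm PG}(2,q^3)$ and exploit the action of the order-$3$ collineation $\alpha$. First I would fix notation: write $x_i = \ell_i \cap \ell_j$ abusively, but more carefully set $P := \ell_1 \cap \ell_2$, which by hypothesis lies in $\cO_2$, so $P, P^\alpha, P^{\alpha^2}$ are three distinct collinear points, lying on the unique line $n \in \cL_1$ through them. Since $\ell_1, \ell_2 \in \cL_3$, they are not $\alpha$-invariant, and $\ell_i^\mu = \ell_i^\alpha \cap \ell_i^{\alpha^2}$ is a well-defined point of $\cO_3$. The goal is to show that the line $m := \ell_1^\mu \ell_2^\mu$ of ${\rm PG}(2,q^3)$ satisfies $m, m^\alpha, m^{\alpha^2}$ distinct and concurrent, i.e. $m \in \cL_2$.

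The key observation I would use is that $\mu$ is equivariant with respect to $\alpha$ in the following sense: since $x^\mu = x^\alpha x^{\alpha^2}$, applying $\alpha$ and using $\alpha^3 = 1$ gives $(x^\mu)^\alpha = x^{\alpha^2} x^{\alpha^3} = x^{\alpha^2} x = (x^\alpha)^{\mu}$ after the appropriate re-indexing — more precisely, one checks directly that $(x^\mu)^\alpha = (x^{\alpha^2})^\mu$, or equivalently that $\mu$ and $\alpha$ "commute up to the cyclic shift." The same holds dually for lines of $\cL_3$: $(\ell^\mu)^\alpha = (\ell^{\alpha^2})^\mu$. Now apply $\alpha$ to the line $m = \ell_1^\mu \ell_2^\mu$. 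We get $m^\alpha = (\ell_1^\mu)^\alpha (\ell_2^\mu)^\alpha = (\ell_1^{\alpha^2})^\mu (\ell_2^{\alpha^2})^\mu$ and $m^{\alpha^2} = (\ell_1^\alpha)^\mu (\ell_2^\alpha)^\mu$. So $m, m^\alpha, m^{\alpha^2}$ are the three lines obtained by joining the $\mu$-images of the $\alpha$-orbit-triples of $\ell_1$ and of $\ell_2$.

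To conclude I would argue that these three lines $m, m^\alpha, m^{\alpha^2}$ pass through a common point. The natural candidate is the point $n^\mu$ where $n$ is the line of $\cL_1$ through $P, P^\alpha, P^{\alpha^2}$ — but $n \in \cL_1$, so $n^\mu$ is not literally defined; instead I expect the common point to be the $\cO_3$-point "dual" to $n$ under a suitable correlation, or more concretely: since $\ell_1 \cap \ell_2 = P \in \cO_2$, applying $\alpha^j$ gives $\ell_1^{\alpha^j} \cap \ell_2^{\alpha^j} = P^{\alpha^j}$, all lying on $n$. The line $m^{\alpha^2} = \ell_1^{\mu,\alpha^2}\ell_2^{\mu,\alpha^2}$ should be checked to contain the point $n^\alpha \cap n^{\alpha^2}$ — but $n$ is $\alpha$-invariant, so that degenerates. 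I therefore expect the real computation to be: the three points $\ell_i^\mu = \ell_i^\alpha \cap \ell_i^{\alpha^2}$ together with their $\alpha$-shifts all lie in the pencil through a single $\cO_3$-point, and dualizing the standard fact that three points of $\cO_2$ (here the dual statement for the three lines $\ell_i^\alpha, \ell_i^{\alpha^2}$ and their relation to $P$) forces concurrency. Concretely, I would show that $m$ meets both $n$'s image configuration appropriately; the cleanest route is to pick homogeneous coordinates in which $\alpha$ is the Frobenius-twisted map $X \mapsto X^{(q)}$ on ${\rm PG}(2,q^3)$ fixing ${\rm PG}(2,q)$ pointwise, write $P$, $\ell_1$, $\ell_2$ in coordinates with $\ell_1 \cap \ell_2 = P \in \cO_2$ (so the coordinate vector of $P$ spans a line over $\F_q$ with its conjugates), compute $\ell_i^\mu$ as the cross product of conjugate coordinate vectors, form $m = \ell_1^\mu \times \ell_2^\mu$, and verify that $m, m^{(q)}, m^{(q^2)}$ are linearly dependent (concurrent) but pairwise distinct.

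The main obstacle I anticipate is precisely identifying the common point of $m, m^\alpha, m^{\alpha^2}$ and ruling out the degenerate possibilities $m \in \cL_1$ (which would need $m$ to be $\alpha$-fixed) — this requires using that $P \in \cO_2$ rather than $\cO_1$ or $\cO_3$, i.e. that $P, P^\alpha, P^{\alpha^2}$ are distinct and collinear but $P \notin \cO_1$. The distinctness of $m, m^\alpha, m^{\alpha^2}$ should follow because if two of them coincided, one could trace back through the equivariance of $\mu$ and the injectivity of $\mu$ on $\cO_3$ to force $\ell_1$ or $\ell_2$ into $\cL_1 \cup \cL_2$, contradicting $\ell_i \in \cL_3$. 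I would carry out the coordinate computation only as far as needed to pin down the common point and the non-degeneracy, citing the $\alpha$-equivariance of $\mu$ (established in the previous paragraph) to keep the algebra minimal.
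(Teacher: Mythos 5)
There is a genuine gap: your text is a strategy, not a proof, and the decisive step is exactly the one left undone. After setting $m = \ell_1^\mu \ell_2^\mu$ and observing the $\alpha$-equivariance of $\mu$, the whole content of the lemma is the claim that $m$, $m^\alpha$, $m^{\alpha^2}$ are distinct and concurrent; you yourself flag ``precisely identifying the common point'' as the main obstacle and then defer it to an unperformed coordinate computation (``verify that $m, m^{(q)}, m^{(q^2)}$ are linearly dependent''). In particular, you never show where the hypothesis $\ell_1 \cap \ell_2 \in \cO_2$ enters; this is essential, since if the two Figueroa lines $(\ell_1)_{\cF}$, $(\ell_2)_{\cF}$ met in a point $y \in \cO_3$ one would have $\ell_1^\mu, \ell_2^\mu \in y^\mu$ and hence $m = y^\mu \in \cL_3$, so the conclusion genuinely fails without that hypothesis and no equivariance argument alone can deliver it. A secondary slip: your ``more precise'' identity $(x^\mu)^\alpha = (x^{\alpha^2})^\mu$ is wrong; since $x^\mu = x^\alpha x^{\alpha^2}$ one gets $(x^\mu)^\alpha = x^{\alpha^2} x = (x^\alpha)^\mu$, whereas $(x^{\alpha^2})^\mu = x\, x^{\alpha}$, a different line for $x \in \cO_3$.

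For comparison, the paper avoids all coordinates and proves the statement by contradiction through the Figueroa incidence relation $P \in \ell_{\cF} \cap \cO_3 \iff \ell^\mu \in P^\mu$: if $m \in \cL_3$, then $m^\mu \in \cO_3$ lies on both $(\ell_1)_{\cF}$ and $(\ell_2)_{\cF}$; but since $\ell_1 \cap \ell_2 \in \cO_2$, that intersection point is unchanged by the Figueroa replacement, so $(\ell_1)_{\cF} \cap (\ell_2)_{\cF} = \ell_1 \cap \ell_2 \in \cO_2$, forcing $m^\mu = \ell_1 \cap \ell_2$, a contradiction; the case $m \in \cL_1$ is impossible because lines of $\cL_1$ contain no points of $\cO_3$, while $\ell_1^\mu, \ell_2^\mu \in \cO_3$. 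If you wish to salvage your direct approach, you must actually exhibit the common point of $m, m^\alpha, m^{\alpha^2}$ (or carry out the linear-dependence computation) and make explicit how $\ell_1 \cap \ell_2 \in \cO_2$ is used to exclude both degenerate alternatives $m \in \cL_1$ and $m \in \cL_3$.
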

\begin{proof}
	Assume by contradiction that $\ell_1^\mu \ell_2^\mu \in \cL_3$. Then $(\ell_1^\mu \ell_2^\mu)^\mu \in \cO_3$. Since $\ell_1^\mu \in (\ell_1^\mu \ell_2^\mu)$ and $\ell_2^\mu \in (\ell_1^\mu \ell_2^\mu)$, we have that $(\ell_1^\mu \ell_2^\mu)^\mu \in (\ell_1)_{\cF}$ and $(\ell_1^\mu \ell_2^\mu)^\mu \in (\ell_2)_{\cF}$. On the other hand $\ell_1 \cap \ell_2 = (\ell_1)_{\cF} \cap (\ell_2)_{\cF}$, since $\ell_1 \cap \ell_2 \in \cO_2$. It follows that $(\ell_1^\mu \ell_2^\mu)^\mu = \ell_1 \cap \ell_2$, a contradiction.  
\end{proof}

	\subsection{Polarities of $\cF$}

Let $\rho$ be a polarity of ${\rm PG}(2,q^3)$ such that $\rho$ and $\alpha$ commute, i.e., $\rho \alpha = \alpha \rho$. Let $\cX$ denote the set of $\rho$-absolute points. 
\begin{lemma}\label{lemma2}
The following properties hold true:
\begin{itemize}
\item[1)] $\cX$ is preserved by $\alpha$,
\item[2)] the point $P \in \cO_i$ if and only if $P^\rho \in \cL_i$,
\item[3)] the line $\ell \in \cL_i$ if and only if $\ell^\rho \in \cO_i$,  
\item[4)] if $P \in \cO_3$, then $P^{\mu \rho} = P^{\rho \mu}$,
\item[5)] if $\ell \in \cL_3$, then $\ell^{\mu \rho} = \ell^{\rho \mu}$.
\end{itemize}
\end{lemma}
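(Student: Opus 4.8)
The five claims of Lemma \ref{lemma2} all follow from the single hypothesis $\rho\alpha = \alpha\rho$, together with the fact that $\rho$ is an involutory, incidence-preserving map interchanging points and lines. The plan is to treat them in the order $1) \to 2) \to 3) \to 4) \to 5)$, since the later claims use the earlier ones. The only slightly subtle point is that $\rho$ swaps points with lines, so when we conjugate $\alpha$ by $\rho$ we must be careful that ``$\rho$ commutes with $\alpha$'' makes sense: here $\alpha$ is a collineation that acts on both points and lines, and $\rho\alpha = \alpha\rho$ is to be read as an equality of maps on the whole incidence structure. I expect no real obstacle; the main thing to get right is bookkeeping of which objects (points vs.\ lines) each map is applied to.

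\textbf{Claim 1).} Let $P \in \cX$, i.e.\ $P \in P^\rho$. Applying the collineation $\alpha$ to this incidence gives $P^\alpha \in P^{\rho\alpha} = P^{\alpha\rho} = (P^\alpha)^\rho$, so $P^\alpha \in \cX$. Hence $\alpha$ preserves $\cX$.

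\textbf{Claims 2) and 3).} For 2): $P \in \cO_1$ means $P^\alpha = P$; applying $\rho$ gives $P^{\alpha\rho} = P^\rho$, i.e.\ $(P^\rho)^\alpha = P^\rho$ (using $\rho\alpha=\alpha\rho$), so $P^\rho \in \cL_1$. Conversely, since $\rho$ is an involution, running the argument with $P^\rho$ in place of $P$ shows $P^\rho \in \cL_1 \Rightarrow P \in \cO_1$. For $\cO_3$ versus $\cL_3$: recall from the previous subsection that $P \in \cO_3$ iff $P$ lies on no line of $\cL_1$; dually $\ell \in \cL_3$ iff $\ell$ contains no point of $\cO_1$. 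If $P \in \cO_3$ and some line of $\cL_1$ met $P^\rho$, say $m \in \cL_1$ with $P^\rho \in m$, then applying $\rho$ gives $m^\rho \in P$ with $m^\rho \in \cO_1$ by the $\cO_1$--$\cL_1$ case, contradicting $P \in \cO_3$; hence $P^\rho \in \cL_3$. Since $\{\cO_1,\cO_2,\cO_3\}$ and $\{\cL_1,\cL_2,\cL_3\}$ are partitions and $\rho$ is a bijection, the cases $\cO_1\leftrightarrow\cL_1$ and $\cO_3\leftrightarrow\cL_3$ force $\cO_2\leftrightarrow\cL_2$ as well, giving 2) in full; claim 3) is just 2) read through the involution $\rho$.

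\textbf{Claims 4) and 5).} Recall $\mu$ is defined by $x^\mu = x^\alpha x^{\alpha^2}$ for $x \in \cO_3$ and $\ell^\mu = \ell^\alpha \cap \ell^{\alpha^2}$ for $\ell \in \cL_3$. For 4), take $P \in \cO_3$; by claim 2), $P^\rho \in \cL_3$, so $P^{\rho\mu}$ is defined and equals $(P^\rho)^\alpha \cap (P^\rho)^{\alpha^2}$. On the other hand $P^\mu = P^\alpha P^{\alpha^2} \in \cL_3$ (it is a line of $\cL_3$ by construction), so $P^{\mu\rho}$ is defined and, since $\rho$ is incidence-preserving and sends the join of two points to the meet of their images, $P^{\mu\rho} = (P^\alpha P^{\alpha^2})^\rho = P^{\alpha\rho} \cap P^{\alpha^2\rho}$. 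Now use $\alpha\rho = \rho\alpha$ (and hence $\alpha^2\rho = \rho\alpha^2$) to rewrite $P^{\alpha\rho} = (P^\rho)^\alpha$ and $P^{\alpha^2\rho} = (P^\rho)^{\alpha^2}$, which gives exactly $P^{\rho\mu}$. Claim 5) is the dual statement and follows by the same computation with the roles of join and meet interchanged, again invoking claims 2)--3) to see that all the intermediate objects lie in the types for which $\mu$ is defined. I would write 4) out in detail and dispatch 5) with ``by a dual argument''.
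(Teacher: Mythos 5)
Your proof is correct and follows essentially the same route as the paper: the paper merely asserts that 1)--3) follow directly from $\rho\alpha=\alpha\rho$ and writes out only the chain $P^{\mu\rho}=(P^{\alpha}P^{\alpha^2})^{\rho}=P^{\alpha\rho}\cap P^{\alpha^2\rho}=P^{\rho\alpha}\cap P^{\rho\alpha^2}=P^{\rho\mu}$ for 4), with 5) ``similarly'', which is exactly your argument, and your expansions of 1)--3) supply the details the paper leaves implicit. The only blemish is a notational slip in your $\cO_3$ step: the configuration to rule out should be a point $x\in\cO_1$ incident with the line $P^\rho$ (not a line $m\in\cL_1$ with ``$P^\rho\in m$'', which is type-incorrect), whence $P\in x^\rho$ with $x^\rho\in\cL_1$ contradicts $P\in\cO_3$; with that rewording the argument is exactly right.
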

\begin{proof}
Properties $1)$, $2)$ and $3)$ follow directly from the fact that the collineation $\alpha$ and the polarity $\rho$ commute. To prove $4)$, let $P$ be a point of $\cO_3$, then
$$
P^{\mu \rho} = (P^{\alpha} P^{\alpha^2})^{\rho} = P^{\alpha \rho} \cap P^{\alpha^2 \rho} = P^{\rho \alpha} \cap P^{\rho \alpha^2} = P^{\rho \mu}.
$$
Property $5)$ follows similarly.
\end{proof}

Consider the following map $\rho_{\cF}$: for points and lines of $\cO_1$ or $\cO_2$, $\rho_{\cF} = {\rho}$. For a point $x \in \cO_3$, $x^{\rho_{\cF}} = (x^{\rho})_{\cF}$, where $(x^{\rho})_{\cF}$ is the line of $\cF$ corresponding to the line $x^{\rho} \in \cL_3$ as described in the previous subsection. For a line $\ell \in \cL_3$, let $(\ell_{\cF})^{\rho_{\cF}} = \ell^{\rho}$. Since $\rho$ commutes with $\mu$, $\rho_{\cF}$ is indeed a polarity of $\cF$. Furthermore, if $x$ is a point of $\cO_3$, then $x$ is $\rho_{\cF}$-absolute if and only if $x^{\mu \rho} \in \cX$. Hence, if we denote by $\cX_{\cF}$ the $\rho_{\cF}$-absolute points, we have that 
$$
\cX_{\cF} = (\cX \cap \cO_1) \cup (\cX \cap \cO_2) \cup \{x^{\mu \rho} \; | \; x \in \cX \cap \cO_3\} .
$$
Since $\mu \rho$ is a bijection, the number of points of $\cO_3$ which are $\rho_{\cF}$-absolute equals the number of points of $\cO_3$ which are $\rho$-absolute. Thus, the number of absolute points of $\rho_{\cF}$ is the same as that of $\rho$.

A stronger result than Lemma \ref{lemma2} has been proved by Hamilton in his Ph.D thesis.
\begin{theorem}\cite[Theorem 4.2]{Ham}
Every polarity of the Figueroa plane $\cF$ induces a polarity of its Desarguesian subplane $\cD$ and vice versa.
\end{theorem}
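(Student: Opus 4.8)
The plan is to exploit that $\cF$ and ${\rm PG}(2,q^3)$ have the same points and the same lines, that the order-$3$ collineation $\alpha$ is also a collineation of $\cF$, and that the Desarguesian subplane $\cD$ --- the $\cO_1$-points together with the $\cL_1$-lines --- sits inside $\cF$ with exactly the incidences it has in ${\rm PG}(2,q^3)$, since lines of $\cL_1$ are left untouched by the Figueroa construction. Granting for a moment the key claim that every polarity $\rho'$ of $\cF$ maps $\cO_1$ onto $\cL_1$ and $\cL_1$ onto $\cO_1$, the restriction $\rho'|_{\cO_1 \cup \cL_1}$ is then a correlation of $\cD$ (it respects $\cD$-incidence, which on $\cO_1 \cup \cL_1$ coincides with $\cF$-incidence) and is involutory because $\rho'$ is; that is, it is a polarity of $\cD$. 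So the theorem reduces to this key claim together with its converse.

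For the key claim I would pass to the automorphism group. By the classical determination of the collineation group of a Figueroa plane \cite{F, HS, G}, $C(\cF)$ is a split extension of ${\rm PGL}(3,q)$ by the cyclic group generated by the Frobenius $x \mapsto x^p$ of $\F_{q^3}$, and $\alpha$ generates the centre of $C(\cF)$ --- a short computation once the group is known, using that $\alpha$ acts trivially on $\F_q$. Since a polarity $\rho'$ of $\cF$ acts on $C(\cF)$ by conjugation, it fixes the centre $\langle \alpha \rangle$ setwise, so $\rho' \alpha (\rho')^{-1} = \alpha^{\varepsilon}$ with $\varepsilon \in \{1,2\}$; this already suffices, for if $P$ is an $\alpha$-fixed point then $(P^{\rho'})^{\alpha} = P^{\rho' \alpha} = P^{\alpha^{\varepsilon} \rho'} = (P^{\alpha^{\varepsilon}})^{\rho'} = P^{\rho'}$, so $P^{\rho'}$ is $\alpha$-fixed, i.e. $P^{\rho'} \in \cL_1$; comparing cardinalities gives $\rho'(\cO_1) = \cL_1$, and dually $\rho'(\cL_1) = \cO_1$. (Alternatively, one can describe the three types of points intrinsically inside $\cF$, in the spirit of Grundh\"ofer's synthetic construction, and observe that any correlation respects such a description; the group-theoretic route seems cleaner.)

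For the converse I would reuse the construction carried out in this section. Given a polarity $\sigma$ of $\cD \cong {\rm PG}(2,q)$, described by a symmetric or Hermitian matrix $A$ over $\F_q$ and a field automorphism, choose coordinates so that $\alpha$ is $x \mapsto x^q$; then the same matrix $A$ together with the obvious extension of the automorphism defines a polarity $\rho$ of ${\rm PG}(2,q^3)$ commuting with $\alpha$ --- both the (symmetry or Hermitian) condition on $A$ and the commuting condition survive the field extension because $x \mapsto x^q$ restricts to the identity on $\F_q$. The polarity $\rho_{\cF}$ built from such a $\rho$ earlier in this section is then a polarity of $\cF$ that restricts to $\rho|_{\cD} = \sigma$ on $\cD$. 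One can further check that the two assignments are mutually inverse --- a polarity of $\cF$ is pinned down by its restriction to $\cD$, since a quadrangle of $\cD$ is still a quadrangle of $\cF$ and the only collineations of $\cF$ fixing $\cD$ pointwise are the powers of $\alpha$ --- so that polarities of $\cF$ and of $\cD$ correspond bijectively.

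The main obstacle is the key claim: the whole argument rests on a polarity of $\cF$ being unable to mix the point types, and the cleanest justification of this goes through the (non-trivial, but known) structure of the full collineation group of $\cF$. Beyond that the steps are routine; the one point requiring genuine care is verifying, uniformly for orthogonal and unitary $\sigma$, that coordinates can be chosen in which the given $\alpha$ commutes with the lift $\rho$ of $\sigma$.
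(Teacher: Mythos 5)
You should know at the outset that the paper does not prove this statement at all: it is quoted from Hamilton's thesis \cite{Ham}, and the only related material the paper actually proves is Lemma~\ref{lemma2} together with the construction of $\rho_{\cF}$, i.e.\ the passage from a polarity $\rho$ of ${\rm PG}(2,q^3)$ commuting with $\alpha$ to a polarity of $\cF$. Your ``vice versa'' half is essentially that same machinery, preceded by the lift of a polarity $\sigma$ of $\cD$ to a polarity of ${\rm PG}(2,q^3)$ commuting with $\alpha$; this lift does work as you suggest (orthogonal and pseudo polarities lift with the same matrix and trivial automorphism, the unitary case uses the involutory automorphism $x\mapsto x^{q\sqrt q}$ of $\F_{q^3}$, which restricts to $x\mapsto x^{\sqrt q}$ on $\F_q$, and commutation with $\alpha$ is automatic since the matrix has entries in $\F_q$ and field automorphisms commute), so that half is fine and close in spirit to what the paper does in Section 5.2.

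The forward half --- the substantive direction --- is nowhere in the paper, and your argument for it is sound in outline but places its entire weight on the determination of the full collineation group of $\cF$ as the centralizer of $\alpha$ in ${\rm P\Gamma L}(3,q^3)$, i.e.\ $\PGL(3,q)$ extended by the Frobenius \cite{HS,G}. Granting that citation, the rest is correct: $\langle\alpha\rangle$ is the centre, conjugation by a correlation of $\cF$ is an automorphism of the collineation group and so preserves $\langle\alpha\rangle$, your computation shows $\alpha$-fixed points map to $\alpha$-fixed lines, and $|\cO_1|=|\cL_1|$ forces $\rho'(\cO_1)=\cL_1$ and dually; since lines of $\cL_1$ are unchanged in $\cF$, the restriction is a polarity of $\cD$. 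Be aware, however, that all the depth of the theorem is concentrated in that cited group-theoretic fact, which is of comparable difficulty to the statement itself. One genuine (though inessential) gap: your closing claim that the two assignments are mutually inverse is not justified as written. Collineations of $\cF$ fixing $\cD$ pointwise form $\langle\alpha\rangle$, so two polarities of $\cF$ with the same restriction to $\cD$ agree only up to composition with a power of $\alpha$, and to conclude uniqueness you would still have to exclude that $\alpha\rho'$ or $\alpha^2\rho'$ is again a polarity, which requires knowing $\rho'\alpha\rho'^{-1}=\alpha$ rather than $\alpha^{-1}$. Since the theorem only asserts ``induces \dots and vice versa'', this does not affect the correctness of what was to be proved, but it should be either fixed or dropped.
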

Note that, since a polarity of $\cD$ extends to a polarity of ${\rm PG}(2,q^3)$, it easily follows that a polarity of $\cF$ induces a polarity of ${\rm PG}(2,q^3)$.

We end this section by considering the self-polar triangles with respect to polarities of $\cF$.

\begin{lemma}\label{triangles}
Let $T$ be a triangle containing at least two points of $\cO_1 \cup \cO_2$. Then T is self polar with respect to $\rho$ if and only if $T$ is self-polar with respect to $\rho_\cF$.
\end{lemma}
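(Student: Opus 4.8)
The plan is to prove Lemma~\ref{triangles} by carefully tracking how the points and lines of the triangle $T$ behave under the two polarities, using the fact that $\rho_{\cF}$ agrees with $\rho$ on everything of type $1$ or $2$ and only differs on objects of type $3$ (where it is twisted by $\mu$). Let $T$ have vertices $P_1, P_2, P_3$ with opposite sides $\ell_1 = P_2P_3$, $\ell_2 = P_1P_3$, $\ell_3 = P_1P_2$. By hypothesis at least two vertices, say $P_1, P_2$, lie in $\cO_1 \cup \cO_2$; then by definition the line $\ell_3 = P_1P_2$ — computed in ${\rm PG}(2,q^3)$, which is legitimate since $P_1,P_2 \notin \cO_3$ — is a line of $\cL_1 \cup \cL_2$ (a line through two points not in $\cO_3$ cannot be in $\cL_3$, because every point of a line in $\cL_3$ except two is in $\cO_3$; more carefully, one uses that if $\ell\in\cL_3$ then $\ell^\mu\in\cO_3$ lies on $\ell$, and the points of $\cO_1\cup\cO_2$ on $\ell$ are exactly the two points $x^\alpha, x^{\alpha^2}$ where $\ell = x^\alpha x^{\alpha^2}$). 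Hence the vertex $P_1\vee P_2$ and the side $\ell_3$ are ``classical'': $\rho_{\cF}$ and $\rho$ act identically on $P_1$, $P_2$, $\ell_3$.

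Next I would split into the two cases for the third vertex $P_3$. \textbf{Case A: $P_3 \in \cO_1 \cup \cO_2$.} Then similarly all three sides $\ell_1, \ell_2, \ell_3$ are of type $1$ or $2$, and all three vertices are of type $1$ or $2$, so $\rho_{\cF}$ and $\rho$ agree on every vertex and every side of $T$; the equivalence $T$ self-polar w.r.t.\ $\rho$ $\iff$ $T$ self-polar w.r.t.\ $\rho_{\cF}$ is then immediate, since $P_i^{\rho}=\ell_i$ for all $i$ reads the same for both polarities. \textbf{Case B: $P_3 \in \cO_3$.} This is the substantive case. By Lemma~\ref{lemma2}(2), if $T$ is self-polar w.r.t.\ $\rho$ then $P_3^\rho = \ell_3 \in \cL_1 \cup \cL_2$; but we just argued $\ell_3\in\cL_1\cup\cL_2$ already, while $P_3\in\cO_3$ forces $P_3^\rho\in\cL_3$ — contradiction. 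So in Case B, $T$ is never self-polar with respect to $\rho$. I must then show it is also never self-polar with respect to $\rho_{\cF}$: suppose it were, so $P_3^{\rho_{\cF}} = \ell_3$. By the definition of $\rho_{\cF}$ on points of $\cO_3$, $P_3^{\rho_{\cF}} = (P_3^\rho)_{\cF}$, the Figueroa line attached to $P_3^\rho\in\cL_3$. But $(P_3^\rho)_{\cF}$ is a line of $\cF$ coming from a type-$3$ line, whereas $\ell_3\in\cL_1\cup\cL_2$ is unchanged in $\cF$; these are different lines of $\cF$ (a Figueroa line of type $3$ is distinct, as a point set, from any type-$1$ or type-$2$ line, since it contains points of $\cO_3$ not on the original $\cL_3$-line and in particular differs from it). Hence $P_3^{\rho_{\cF}} \ne \ell_3$, so $T$ is not $\rho_{\cF}$-self-polar either, and the biconditional holds vacuously (both sides false).

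The main obstacle, and the step needing the most care, is the bookkeeping in Case B: one must pin down precisely why a line of $\cL_1\cup\cL_2$ cannot coincide, \emph{as a line of $\cF$}, with the Figueroa line $(\ell')_{\cF}$ attached to some $\ell'\in\cL_3$, and dually why the three sides of a triangle two of whose vertices avoid $\cO_3$ are forced into $\cL_1\cup\cL_2$. Both reduce to the combinatorial description of the three orbit-types and the replacement rule defining $\ell_{\cF}$ recalled above (together with the characterization ``$P\in\cO_3 \iff P$ lies on no line of $\cL_1$'' and its dual), so no genuinely new ingredient is required — but the argument should be stated cleanly, perhaps isolating the claim ``a side joining two points of $\cO_1\cup\cO_2$ lies in $\cL_1\cup\cL_2$'' as a one-line sub-observation before the case split. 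It is also worth remarking, to connect with the stated goal of the section, that Case A is exactly the situation that transfers triangle-free induced subgraphs between $\cG({\rm PG}(2,q^3),\rho)$ and $\cG(\cF,\rho_{\cF})$, since under a $\rho$ with ``enough'' absolute structure the relevant self-polar triangles will have all vertices outside $\cO_3$ or at least two inside $\cO_1\cup\cO_2$.
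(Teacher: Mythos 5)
There is a genuine gap, and it sits at the very first step on which your whole case analysis rests. Your sub-observation ``a line joining two points of $\cO_1\cup\cO_2$ lies in $\cL_1\cup\cL_2$'' is false, and so is the justification you give for it: on a line $\ell=x^\alpha x^{\alpha^2}\in\cL_3$ the two distinguished points $x^\alpha,x^{\alpha^2}$ are themselves points of $\cO_3$ (since $x\in\cO_3$), and $\ell$ carries $q^3-q^2-q$ points of $\cO_3$ in total; the remaining $q^2+q+1$ points of $\ell$ all lie in $\cO_2$ (none in $\cO_1$, dually to ``a point of $\cO_3$ is on no line of $\cL_1$''). Hence two points of $\cO_2$ can perfectly well span a line of $\cL_3$. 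This makes your Case B collapse: with $P_1,P_2\in\cO_2$ and $P_3\in\cO_3$ there is no contradiction in $P_3^{\rho}=P_1P_2\in\cL_3$, so such triangles are \emph{not} excluded from being self-polar with respect to $\rho$, and the biconditional is not vacuous in that case. This is in fact the essential content of the lemma: the subsequent bijection lemma reduces to self-polar triangles with no point of $\cO_1$ and at most one point of $\cO_2$ precisely because the present lemma is supposed to handle, non-vacuously, triangles with two vertices in $\cO_2$ and one in $\cO_3$. Your Case A conclusion happens to be correct, but its justification leans on the same false claim; it should instead be derived from self-polarity (the sides are then polar lines of type-$1$/$2$ points, hence in $\cL_1\cup\cL_2$ by Lemma \ref{lemma2}).

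The paper's proof avoids any assertion about the type of the side $P_1P_2$ and thereby treats all positions of $P_3$ uniformly. Since $P_1,P_2\in\cO_1\cup\cO_2$, one has $P_1^{\rho}=P_1^{\rho_{\cF}}$ and $P_2^{\rho}=P_2^{\rho_{\cF}}$, and these are lines of $\cL_1\cup\cL_2$, unchanged in $\cF$; so the two conditions $P_1^{\bullet}=P_2P_3$, $P_2^{\bullet}=P_1P_3$ read the same for $\rho$ and $\rho_{\cF}$, and the remaining condition on the third vertex is handled by the chain
\[P_3^{\rho}=P_1P_2 \iff P_3=P_1^{\rho}\cap P_2^{\rho}=P_1^{\rho_{\cF}}\cap P_2^{\rho_{\cF}} \iff P_3^{\rho_{\cF}}=P_1P_2 ,\]
where the middle equality of intersection points uses only that the two polar lines are of type $1$ or $2$. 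If you want to keep your case split, Case B must be replaced by this kind of argument (or by first showing, \emph{from self-polarity}, what type the side $P_1P_2$ has), not by declaring both sides of the equivalence false.
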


\begin{proof}
Let $T = \{P_1, P_2, P_2\}$, where $P_1, P_2 \in T \cap (\cO_1 \cup \cO_2)$. Then $P_1^{\rho} = P_1^{\rho_{\cF}} = P_2 P_3$ and $P_2^\rho = P_2^{\rho_{\cF}} = P_1 P_3$. On the other hand, $P_3^{\rho} = P_1 P_2$ if and only if $P_3 = P_1^\rho \cap P_2^\rho = P_1^{\rho_{\cF}} \cap P_2^{\rho_{\cF}}$ if and only if $P_3^{\rho_{\cF}} = P_1 P_2$, as required.
\end{proof}
\begin{remark}\label{oss1}
Note that if a triangle $T$ has at least one of its points in $\cO_1$, then $T$ is contained in $\cO_1 \cup \cO_2$, while if two of its points are in $\cO_1$, then its third point will belong to $\cO_1$ as well.
\end{remark}

\begin{lemma}
There is a bijection between the self-polar triangles of ${\rm PG}(2,q^3)$ with respect to $\rho$ and the self-polar triangles of $\cF$ with respect to $\rho_{\cF}$. 
\end{lemma}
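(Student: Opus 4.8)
The plan is to sort the self-polar triangles by the distribution of their vertices among $\cO_1,\cO_2,\cO_3$ and to treat the resulting families separately. In a self-polar triangle $\{P_1,P_2,P_3\}$ one has $P_i^{\rho}=P_jP_k$, and by Lemma \ref{lemma2}(2) the side $P_jP_k$ lies in $\cL_3$ precisely when $P_i\in\cO_3$. Combining this with Remark \ref{oss1}, one sees that a self-polar triangle — with respect to $\rho$ or to $\rho_{\cF}$ — is of one of the six types $\cO_1\cO_1\cO_1$, $\cO_1\cO_2\cO_2$, $\cO_2\cO_2\cO_2$, $\cO_2\cO_2\cO_3$, $\cO_2\cO_3\cO_3$, $\cO_3\cO_3\cO_3$, the first four being exactly those having at least two vertices in $\cO_1\cup\cO_2$.

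For a triangle $T$ of one of the first four types, every side of $T$ lies in $\cL_1\cup\cL_2$ except for a single side (occurring only in type $\cO_2\cO_2\cO_3$) which lies in $\cL_3$; but that exceptional side does not enter the polar relations checked in Lemma \ref{triangles}. Hence Lemma \ref{triangles} shows that $T$ is a self-polar triangle of ${\rm PG}(2,q^3)$ with respect to $\rho$ if and only if it is a self-polar triangle of $\cF$ with respect to $\rho_{\cF}$: the collection of such triangles is literally the same in both planes, so the identity is a type-preserving bijection between them.

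For the two remaining types I would use the map $\phi$ that fixes $\cO_1\cup\cO_2$ pointwise and sends $x\in\cO_3$ to $x^{\mu\rho}$. By Lemma \ref{lemma2}(2),(3) this maps $\cO_3$ into itself, and by Lemma \ref{lemma2}(4) it restricts to an involution of $\cO_3$ (there $\mu\rho=\rho\mu$, while $\mu^2=\rho^2={\rm id}$); so $\phi$ is a type-preserving involution of the points of $\cF$. The claim is that $\phi$ carries $\rho$-self-polar triangles of types $\cO_2\cO_3\cO_3$ and $\cO_3\cO_3\cO_3$ to $\rho_{\cF}$-self-polar triangles of the same two types, and conversely; being involutory, it is then a bijection. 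To verify this, note first that for a vertex $P\in\cO_3$ one has $\phi(P)^{\rho_{\cF}}=(\phi(P)^{\rho})_{\cF}=(P^{\mu})_{\cF}$, and then use the incidence rule $R\in_{\cF}\ell_{\cF}\cap\cO_3\iff \ell^{\mu}\in R^{\mu}$ (and its $\mu$-transform $R\in\ell\iff \ell^{\mu}\in_{\cF}(R^{\mu})_{\cF}$ for $R\in\cO_3,\ \ell\in\cL_3$) together with the polar relations $P_i^{\rho}=P_jP_k$ to check that the Figueroa line $\phi(P)^{\rho_{\cF}}$ passes through the $\phi$-images of the other two vertices; non-degeneracy of the image triangle then follows since $\phi(P)$ is $\rho_{\cF}$-absolute exactly when $P$ is $\rho$-absolute. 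For type $\cO_3\cO_3\cO_3$ this runs through cleanly in both directions. For type $\cO_2\cO_3\cO_3$, say $T=\{A,P,Q\}$ with $A\in\cO_2$, $P,Q\in\cO_3$, $A^{\rho}=PQ\in\cL_2$, $P^{\rho}=AQ\in\cL_3$, $Q^{\rho}=AP\in\cL_3$, one keeps $A$ fixed, and Lemma \ref{lemma1} (applied to $AP,AQ\in\cL_3$ meeting in $A\in\cO_2$) shows that $\phi(P)\phi(Q)=(AQ)^{\mu}(AP)^{\mu}$ again lies in $\cL_2$ and is therefore unchanged in $\cF$. What remains is the identity $\phi(P)\phi(Q)=A^{\rho}$, equivalently $A=P^{\mu}\cap Q^{\mu}$, equivalently $(AQ)^{\mu}\in PQ$. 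I expect this last point to be the main obstacle: it is not a formal consequence of the polarity and collineation identities (those only produce tautologies here) and must instead be drawn from the geometry of the Figueroa construction itself, in the spirit of the proof of Lemma \ref{lemma1} — using for instance that a line of $\cL_2$ passes through a unique point of $\cO_1$ and that the $\alpha$-orbit of a point of $\cO_2$ is collinear. Granting this, combining the identity on the first four types with $\phi$ on the last two yields the desired bijection.
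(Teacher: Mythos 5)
Your reduction to types, the identity correspondence on all triangles with at least two vertices in $\cO_1 \cup \cO_2$ (via Lemma \ref{triangles} and Remark \ref{oss1}), and the map $x \mapsto x^{\mu\rho}$ on triangles entirely contained in $\cO_3$ all coincide with the paper's argument; for the $\cO_3\cO_3\cO_3$ case the paper carries out exactly the equivalence you sketch, using the incidence rule $R \in \ell_{\cF} \cap \cO_3 \iff \ell^\mu \in R^\mu$ together with Lemma \ref{lemma2}.

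The genuine gap is in the mixed type $\cO_2\cO_3\cO_3$, and it is not a missing computation but a wrongly chosen map. You keep the $\cO_2$-vertex $A$ fixed and send $P,Q$ to $P^{\mu\rho},Q^{\mu\rho}$; as you note, this is a $\rho_{\cF}$-self-polar triangle only if $A = P^\mu \cap Q^\mu$. That identity is false in general, not merely hard to prove: since $A$ lies on the side $AQ = P^\rho$, it would force $A \in P^\mu \cap P^\rho$, i.e.\ $A$ would have to be the single point $P^\mu \cap P^\rho$ (these are two lines of $\cL_3$, generically distinct). But for a fixed non-absolute $P \in \cO_3$, every non-absolute point $A \in \cO_2$ on $P^\rho$ whose conjugate $Q = A^\rho \cap P^\rho$ lies in $\cO_3$ yields such a mixed self-polar triangle, and there are in general many such $A$ on $P^\rho$; they cannot all equal $P^\mu \cap P^\rho$. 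The paper's proof avoids this precisely by \emph{not} fixing the $\cO_2$-vertex: it sends $\{A,P,Q\}$ to $\{P^{\mu\rho},\,Q^{\mu\rho},\,P^\mu \cap Q^\mu\}$, where the new $\cO_2$-vertex is $P^\mu \cap Q^\mu = (AQ)^{\mu\rho} \cap (AP)^{\mu\rho}$; Lemma \ref{lemma1} (applied to the lines $AP, AQ \in \cL_3$ through $A \in \cO_2$) together with Lemma \ref{lemma2} guarantees this point lies in $\cO_2$, so it is untouched by the Figueroa distortion and lies on the relevant Figueroa lines, and the same $\mu$-incidence computation as in the $\cO_3\cO_3\cO_3$ case then gives self-polarity in $\cF$ and the bijection on this type. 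So to repair your argument you must replace your map on the mixed type by this one (and re-verify bijectivity there), rather than look for additional Figueroa geometry to justify $A = P^\mu \cap Q^\mu$.
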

\begin{proof}
Taking into account Lemma \ref{triangles} and Remark \ref{oss1}, we can consider the self-polar triangles containing no point of $\cO_1$ and at most one point of $\cO_2$. Let $T = \{P_1, P_2, P_3\}$ be a self-polar triangle of ${\rm PG}(2,q^3)$ with respect to $\rho$. Let $P_i^\rho = \ell_i$, that is, $\ell_1 = P_2 P_3$, $\ell_2 = P_1 P_3$, $\ell_3 = P_1 P_2$. Assume first that $T \subseteq \cO_3$. We show that $T$ is a self-polar with respect to $\rho$ if and only if $T_{\cF} = \{\ell_1^\mu, \ell_2^\mu, \ell_3^\mu\}$ is a self polar triangle with respect to $\rho_{\cF}$. Indeed, 
\begin{align*}
\ell_1^\mu = \ell_2^{\mu \rho_{\cF}} \cap \ell_3^{\mu \rho_{\cF}} & \iff \ell_{2}^{\mu}, \ell_{3}^{\mu} \in \ell_1^{\mu \rho_{\cF}} = (\ell_1^{\mu \rho})_{\cF} \\ 
& \iff (\ell_1^{\mu \rho})^{\mu} \in (\ell_2^{\mu})^\mu, (\ell_3^{\mu})^\mu \\ 
& \iff \ell_1^\rho \in \ell_2,\ell_3 \\ 
& \iff P_1 \in P_2^\rho, P_3^\rho \\
& \iff P_1 = P_2^\rho \cap P_3^\rho, 
\end{align*}

\noindent and similarly for any permutation of the indices.

On the other hand, if $P_1 \in \cO_2$ and $P_2, P_3 \in \cO_3$, let $P = \ell_2^{\mu \rho} \cap \ell_3^{\mu \rho} = (\ell_2^\mu \ell_3^\mu)^\rho = P_2^\mu \cap P_3^\mu$. Then, taking into account Lemma \ref{lemma1} and Lemma \ref{lemma2}, we have that $P \in \cO_2$. Moreover, $T$ is a self-polar triangle with respect to $\rho$ if and only $T_{\cF} = \{\ell_2^\mu, \ell_3^\mu, P\}$ is a self polar triangle with respect to $\rho_{\cF}$. Indeed, a similar argument as used above gives $P_2 = P_1^\rho \cap P_3^\rho$ if and only if $\ell_2^\mu = \ell_3^{\mu \rho_{\cF}} \cap P^{\rho_{\cF}}$ and $P_3 = P_1^\rho \cap P_2^\rho$ if and only if $\ell_3^\mu = \ell_2^{\mu \rho_{\cF}} \cap P^{\rho_{\cF}}$. Moreover, since $P \in \ell_i^{\mu \rho} \cap \cO_2 \subset \ell_i^{\mu \rho} \cap (\ell_i^{\mu \rho})_{\cF}$, $i = 2,3$, it follows that $P \in (\ell_i^{\mu \rho})_{\cF} = \ell_i^{\mu \rho_{\cF}}$.  
\end{proof}

\begin{theorem}\label{Th3}
Let $\cZ$ be a triangle-free set consisting of non-absolute points with respect to $\rho$, then 
$$
\cZ_{\cF} = (\cZ \cap \cO_1) \cup \{x^{\mu \rho} \; | \; x \in \cZ \cap \cO_3\}
$$
is a triangle-free set consisting of non-absolute points with respect to $\rho_{\cF}$.
\end{theorem}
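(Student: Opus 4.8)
\emph{Proof proposal.} The plan is to prove the two assertions of Theorem \ref{Th3} --- that $\cZ_{\cF}$ consists of non-absolute points with respect to $\rho_{\cF}$, and that $\cZ_{\cF}$ is triangle-free --- in turn. First a preliminary observation: by Lemma \ref{lemma2}(3) the map $x \mapsto x^{\mu\rho}$ sends $\cO_3$ to $\cO_3$, and since $\mu$ and $\rho$ are involutory it is a bijection of $\cO_3$. Consequently $\cZ_{\cF}$ is the disjoint union of $\cZ \cap \cO_1 \subseteq \cO_1$ and $\{x^{\mu\rho} \mid x \in \cZ \cap \cO_3\} = \cZ_{\cF} \cap \cO_3$; in particular $\cZ_{\cF} \subseteq \cO_1 \cup \cO_3$ and $\cZ_{\cF}$ contains no point of $\cO_2$.

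For the first assertion, if $x \in \cZ \cap \cO_1$ then $x^{\rho_{\cF}} = x^\rho$ is a line of $\cL_1$ by Lemma \ref{lemma2}(2), which is unchanged in $\cF$, and since $x \notin x^\rho$ the point $x$ is not $\rho_{\cF}$-absolute. If $x \in \cZ \cap \cO_3$, put $y := x^{\mu\rho} \in \cO_3$ and recall that a point of $\cO_3$ is $\rho_{\cF}$-absolute if and only if its image under $\mu\rho$ lies in $\cX$; so $y$ is $\rho_{\cF}$-absolute if and only if $y^{\mu\rho} \in \cX$. A short computation --- writing $\ell := x^\mu \in \cL_3$, using $\ell^{\rho\mu} = \ell^{\mu\rho}$ (Lemma \ref{lemma2}(5)) and the involutivity of $\mu$ and $\rho$ --- gives $y^{\mu\rho} = x^{\mu\rho\mu\rho} = x$. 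Hence $y$ is $\rho_{\cF}$-absolute precisely when $x \in \cX$, which never happens since $\cZ$ consists of non-absolute points.

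For the second assertion, suppose towards a contradiction that $T$ is a self-polar triangle of $\cF$ with respect to $\rho_{\cF}$ with $T \subseteq \cZ_{\cF}$. Since $\cZ_{\cF} \subseteq \cO_1 \cup \cO_3$ while, by Remark \ref{oss1}, any triangle meeting $\cO_1$ is contained in $\cO_1 \cup \cO_2$, the triangle $T$ is contained either in $\cO_1$ or in $\cO_3$. If $T \subseteq \cO_1$, its vertices lie in $\cZ \cap \cO_1 \subseteq \cZ$ and, by Lemma \ref{triangles}, $T$ is self-polar with respect to $\rho$ as well, contradicting that $\cZ$ is triangle-free. If $T \subseteq \cO_3$, then its vertices lie in $\cZ_{\cF} \cap \cO_3$, so $T = \{x_1^{\mu\rho}, x_2^{\mu\rho}, x_3^{\mu\rho}\}$ with $x_1, x_2, x_3 \in \cZ \cap \cO_3$ distinct (distinct because $x \mapsto x^{\mu\rho}$ is injective on $\cO_3$). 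By the proof of the preceding lemma, $\{x_1^{\mu\rho}, x_2^{\mu\rho}, x_3^{\mu\rho}\}$ is a self-polar triangle of $\cF$ with respect to $\rho_{\cF}$ if and only if $\{x_1, x_2, x_3\}$ is a self-polar triangle of ${\rm PG}(2,q^3)$ with respect to $\rho$; hence $\{x_1, x_2, x_3\}$ is a self-polar triangle contained in $\cZ$, again a contradiction. Therefore $\cZ_{\cF}$ is triangle-free.

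The step needing the most care is the last one: invoking the correspondence of the preceding lemma requires knowing that it genuinely matches $T$ with the specific triangle $\{x_1, x_2, x_3\}$, which is precisely why one first records that $x \mapsto x^{\mu\rho}$ is a bijection of $\cO_3$ and that the correspondence carries triangles lying in $\cO_3$ to triangles lying in $\cO_3$. The rest --- the absoluteness computation and the dichotomy $T \subseteq \cO_1$ or $T \subseteq \cO_3$ --- is routine bookkeeping with the properties of $\rho_{\cF}$ and $\mu$ established in this section.
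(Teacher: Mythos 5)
Your proof is correct and follows essentially the same route as the paper: a hypothetical $\rho_{\cF}$-self-polar triangle in $\cZ_{\cF}$ is pulled back through the correspondence established in the preceding lemma (together with Lemma \ref{triangles} and Remark \ref{oss1}) to a self-polar triangle with respect to $\rho$ inside $\cZ$, a contradiction. The only difference is that you spell out explicitly the non-absoluteness of $\cZ_{\cF}$ and the case $T \subseteq \cO_1$, which the paper leaves implicit in its computation of $\cX_{\cF}$ and in the assertion that $T_{\cF}$ is ``necessarily contained in $\cO_3$''.
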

\begin{proof}
Assume by contradiction that there exists a self-polar triangle with respect to $\rho_{\cF}$, say $T_{\cF}$, contained in $\cZ_{\cF}$, then necessarily $T_{\cF}$ is contained in $\cO_3$. If $T_{\cF} = \{P_1^{\rho \mu}, P_2^{\rho \mu}, P_3^{\rho \mu}\}$, then it follows that $T = \{P_1, P_2, P_3\}$ is a self-polar triangle with respect to $\rho$ contained in $\cZ$, a contradiction.   
\end{proof}

Finally, taking into account Theorem \ref{Th2} and Theorem \ref{Th3}, we have the following.

\begin{cor}
	Let $\rho$ be a unitary polarity of ${\rm PG}(2,q^6)$, $q$ even, and let $\alpha$ be an order 3 collineation of ${\rm PG}(2,q^6)$ fixing a subplane $\cD \cong {\rm PG}(2,q^2)$ pointwise, such that $\rho$ and $\alpha$ commute. Then, there exists a set of non-absolute vertices of $\cG(\cF, \rho_{\cF})$ inducing a triangle-free subgraph of size $\frac{q^{12}-q^4(q^4-1)(q^2+1)}{2}$. 
\end{cor}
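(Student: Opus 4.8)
The plan is to combine Theorem~\ref{Th2}, applied to $\mathrm{PG}(2,q^6)$ (that is, with its parameter ``$q$'' replaced by $q^3$), with Theorem~\ref{Th3}; the only substantive work is to count how the resulting triangle-free set distributes over the $\alpha$-orbit types $\cO_1,\cO_2,\cO_3$. Since $q$ is even so is $q^3$, so the construction of subsection~\ref{ex} yields a set $\Sigma$ of non-absolute vertices of $\cG(\mathrm{PG}(2,q^6),\rho)$ inducing a triangle-free subgraph with $|\Sigma|=(q^3)^4/2=q^{12}/2$. By Theorem~\ref{Th3}, $\cZ_{\cF}=(\Sigma\cap\cO_1)\cup\{x^{\mu\rho}\mid x\in\Sigma\cap\cO_3\}$ is a triangle-free set of non-absolute points with respect to $\rho_{\cF}$, and as $\mu\rho$ is injective,
\[
|\cZ_{\cF}|=|\Sigma\cap\cO_1|+|\Sigma\cap\cO_3|=|\Sigma|-|\Sigma\cap\cO_2|=\frac{q^{12}}{2}-|\Sigma\cap\cO_2| .
\]
So it suffices to prove $|\Sigma\cap\cO_2|=q^4(q^4-1)(q^2+1)/2$.

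I would first normalize coordinates. As $\cO_1\cong\mathrm{PG}(2,q^2)$, the collineation $\alpha$ fixes four points in general position; choosing coordinates so these are the standard frame forces $\alpha$ to be a field automorphism, which (replacing $\alpha$ by $\alpha^{-1}$ if needed, without changing $\cF$) we take to be $(x,y,z)\mapsto(x^{q^2},y^{q^2},z^{q^2})$, so $\cO_1=\mathrm{PG}(2,q^2)$ is the canonical subplane. A unitary polarity commuting with this $\alpha$ then has a defining matrix with entries in $\F_{q^2}$, Hermitian for $t\mapsto t^{q}$ on $\F_{q^2}$; since all nondegenerate Hermitian forms become equivalent under a change of basis over $\F_{q^2}$ (which commutes with $\alpha$), we may further assume $\rho$ is the standard polarity of subsection~\ref{ex} for $q^3$, with $\cU=\cU_0\colon X^{q^3}Y+XY^{q^3}+Z^{q^3+1}=0$, base point $U_2=(0,1,0)\in\cO_1$, and tangent line $\ell\colon X=0$, which lies in $\cL_1$. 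The pencil is $\cU_\lambda\colon\lambda X^{q^3+1}+X^{q^3}Y+XY^{q^3}+Z^{q^3+1}=0$, $\lambda\in\F_{q^3}$, and by Lemma~\ref{goodseteven} we take $\Sigma=\bigcup_{\lambda\in\Lambda}(\cU_\lambda\setminus\{U_2\})$ with $\Lambda=\{(1+h)^{-1}\mid h\in H\}$, where $H=\ker f$ for a nonzero $\F_2$-linear map $f\colon\F_{q^3}\to\F_2$ with $f(1)=1$.

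Next I would compute $n_1:=|\Sigma\cap\cO_1|$. On $\cO_1=\mathrm{PG}(2,q^2)$ one has $t^{q^3}=t^{q}$ for $t\in\F_{q^2}$, so $\cU_\lambda\cap\cO_1$ is cut out in $\mathrm{PG}(2,q^2)$ by $\lambda X^{q+1}+X^{q}Y+XY^{q}+Z^{q+1}=0$. For $\lambda\in\F_q$ this is a nondegenerate $\F_q$-Hermitian curve of $\mathrm{PG}(2,q^2)$ passing through $U_2$, so $|\cU_\lambda\cap\cO_1|=q^3+1$; for $\lambda\in\F_{q^3}\setminus\F_q$, since $X^{q+1}$ is a norm lying in $\F_q$ the equation forces $X=0$ and hence $\cU_\lambda\cap\cO_1=\{U_2\}$. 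As distinct members of the pencil meet only at $U_2$, this gives $n_1=q^3\,|\Lambda\cap\F_q|$. Finally $|\Lambda\cap\F_q|=|H\cap\F_q|$ (since $h\mapsto(1+h)^{-1}$ maps $\F_q$ onto $\F_q$), and $f(1)=1$ with $1\in\F_q$ makes $f|_{\F_q}$ a nonzero $\F_2$-functional on $\F_q$, so $|H\cap\F_q|=q/2$ and $n_1=q^4/2$.

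It then remains to extract $n_2:=|\Sigma\cap\cO_2|$ by a double count. Because $\alpha$ and $\rho$ commute, $\rho$ maps $\cL_1$ bijectively onto $\cO_1$; moreover a line of $\cL_1,\cL_2,\cL_3$ meets $\cO_1$ in $q^2+1,1,0$ points respectively, and $P^\rho\in\cL_i\iff P\in\cO_i$ (Lemma~\ref{lemma2}). Hence
\[
\sum_{x\in\cO_1}|x^\perp\cap\Sigma|=\sum_{P\in\Sigma}|P^\perp\cap\cO_1|=(q^2+1)\,n_1+n_2 .
\]
On the other hand, Corollary~\ref{intersection} (with $q$ replaced by $q^3$) evaluates the left-hand side: among the $q^4+q^2+1$ points of $\cO_1$ there are $n_1$ lying in $\Sigma$, the point $U_2$, the $q^2$ further points of the line $\ell\cap\cO_1\in\cL_1$, and $q^4-n_1$ other points, contributing $\tfrac{q^6-q^3}{2}$, $0$, $\tfrac{q^6}{2}$, and $\tfrac{q^6+q^3}{2}$ respectively. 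Equating the two expressions and substituting $n_1=q^4/2$ yields $n_2=\tfrac12(q^{10}+q^8-q^6-q^4)=q^4(q^4-1)(q^2+1)/2$, which completes the proof. I expect the main obstacle to be the determination of $n_1$ — establishing the clean split of $\cU_\lambda\cap\cO_1$ according to whether $\lambda\in\F_q$, and checking that exactly $q/2$ good parameters lie in $\F_q$ — together with the care needed to bring $(\rho,\alpha)$ to the standard coordinate form without disturbing either one.
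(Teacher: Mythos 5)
Your proposal is correct and takes essentially the same route as the paper: reduce via Theorems \ref{Th2} and \ref{Th3} to counting $|\cZ \cap \cO_2|$, establish that exactly $q/2$ of the Hermitian curves meet $\cD$ in Hermitian curves of ${\rm PG}(2,q^2)$ (so $|\cZ\cap\cO_1| = q^4/2$) by the $\F_2$-hyperplane argument, and evaluate a count over the lines of $\cL_1$ using Corollary \ref{intersection}. Your aggregated double count $\sum_{x\in\cO_1}|x^\rho\cap\Sigma|$ is just a reorganization of the paper's per-line subtraction $|r\cap\cZ|-|r\cap\cZ\cap\cO_1|$ (avoiding the subplane intersection numbers), and your explicit normalization of the pair $(\alpha,\rho)$ spells out a compatibility step the paper leaves implicit.
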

\begin{proof}
	From Theorem \ref{Th2}, there exists a set $\cZ$ of non-absolute vertices of $\cG({\rm PG}(2,q^6), \rho)$ inducing a triangle-free subgraph of size $q^{12}/2$. From Theorem \ref{Th3}, the set $\cZ_{\cF}$ is a triangle-free set consisting of non-absolute points with respect to $\rho_{\cF}$, where $|\cZ_{\cF}| = |\cZ \setminus \cO_2|$. We will count the number of points of $\cZ \cap \cO_2$, which we have to remove, by inspecting the $q^4+q^2+1$ lines of $\cL_1$. Recall that these lines only contain points of $\cO_1$ and $\cO_2$. As every point of $\cO_2$ lies on exactly one line of $\cL_1$, we will find every point of $\cZ \cap \cO_2$ once. 
	
	By construction, $\cZ$ is the union of $q^3/2$ Hermitian curves of ${\rm PG}(2,q^6)$ pairwise meeting in a point $U_2$ of $\cD$ and having the same tangent line $\ell$ at $U_2$, with their common point $U_2$ deleted. Among these Hermitian curves there are $q/2$ meeting $\cD$ in a Hermitian curve of ${\rm PG}(2,q^2)$. One can see this by using the vector space representation of $\mathbb{F}_{q^3}$ over $\mathbb{F}_2$: the $q^3/2$ Hermitian curves are parametrized by elements of $\mathbb{F}_{q^3}$, which form a hyperplane. As $\mathbb{F}_q$ is a subspace of $\mathbb{F}_{q^3}$ not properly contained in the hyperplane, as it does not contain the element $1 \in \mathbb{F}_q$, this  means that it intersects $\mathbb{F}_q$ in $q/2$ points, which parametrize the $q/2$ Hermitian curves in $\cD$. It follows that for a line $r \in \cL_1$, we can compute  $|r \cap (\cZ \cap \cO_2)| = |r \cap \cZ|-|r \cap \cZ \cap \cO_1|$ using the intersection properties as stated in Corollary \ref{intersection} in both $\mathrm{PG}(2,q^6)$ and $\cD = \PG$ respectively. Therefore, if $r$ is a line of $\cO_1$, then 
	$$
	|r \cap \cZ|-|r \cap \cZ \cap \cO_1| =
	\begin{cases}
	\frac{q^6-q^3}{2}-\frac{q^2-q}{2} & \mbox{if} \; U_2 \notin r \text{ and } r^\rho \in \cZ, \\ 
	\frac{q^6+q^3}{2}-\frac{q^2+q}{2} & \mbox{if} \; U_2 \notin r \text{ and } r^\rho \notin \cZ, \\
	\frac{q^6}{2}-\frac{q^2}{2} & \mbox{if } \; U_2 \in r \neq \ell, \\
	0 & \mbox{if } \; r = \ell.
	\end{cases} 
	$$
	
	The number of lines corresponding to each case is respectively $q^4/2$, $q^4/2$, $q^2$ and $1$. Summing up over all these lines, we obtain the number $|\cZ \cap \cO_2|$ which we had to subtract from $q^{12}/2$ to obtain the result.
\end{proof}

\section{Conclusion and open problems}

In \cite{LT} the following question was posed.
	
\begin{question}\label{question}
	Given a finite projective plane $\pi$ of order $q$ and a polarity $\perp$, is it possible to find a triangle-free subgraph of the polarity graph of size $\frac{1}{2}q^2+o(q^2)$? 
\end{question}

When $\pi = \mathrm{PG}(2,q)$, this question has been almost completely resolved. Depending on the parity of $q$ and the type of $\perp$, there are four possibilities, shown in the table below.
\vspace{.5cm}
\begin{center}
\def\arraystretch{1.5}
\setlength\tabcolsep{.5cm}	
\begin{tabular}{| c | c | c |}
	\hline
	prime power $q$ & type & answer \\ \hline 
	even & pseudo & yes \cite{MPS} \\ 
	odd & orthogonal & yes \cite{P} \\ 
	even square & unitary & yes \\ 
	odd square & unitary & ? \\ \hline
\end{tabular}
\end{center}
\vspace{.5cm}
	Starting from Question \ref{question}, we can state three open problems, ranked in what we believe to be increasing difficulty.
	
\begin{problem}
	Show that there exists a triangle-free induced subgraph of $\DUP$, $q$ odd, of size $\frac{1}{2}q^4+o(q^4)$.
\end{problem}

In Section 4.1 we mention the existence of a triangle-free induced subgraph of $\DUP$, $q$ odd, of size $q^4/3+o(q^4)$. Other ideas will be needed to find larger triangle-free subgraphs.

\begin{problem}[Conjecture 1 in \cite{MW}]
	Prove or disprove that in case 2, i.e. $\pi = \mathrm{PG}(2,q)$ and $\perp$ is an orthogonal polarity, Parsons' examples are the largest. If true, is it possible to show that they are the unique triangle-free induced subgraphs of this size?
\end{problem}	
	
As shown by Loucks and Timmons, this can only be true when $q$ is large enough. Using a computer search, they found larger examples for $q = 5,7,9,13$.

\begin{problem}
	What if $\pi$ is not the Desarguesian projective plane $\mathrm{PG}(2,q)$? Can we still answer Question \ref{question} in the affirmative?
\end{problem}		

In the case when $\pi$ is the Figueroa plane and the unitary polarity is inherited, we showed that the answer is indeed yes. In fact, one can do this for any inherited polarity by Theorem \ref{Th3}, but for this article, we restrict ourselves to the unitary case.

Lastly, remark that there exist projective planes of order $q$ and polarities where the size of the set of absolute points does not belong to $\verz{q+1,q\sqrt{q}+1}$, see \cite[Chapter XII]{HP}, \cite{Piper}. \\

\textbf{Acknowledgment.} We thank the referees for their valuable comments which improved the quality of this paper. We also thank B. Csajb\'ok for the proofs of Lemma \ref{restatement} and Theorem \ref{goodsetodd} and A. Bishnoi and A. Potukuchi for the proof of the prime case independently.
	
\urlstyle{same}

\end{document}